\renewcommand{\div}{\operatorname{div}}
\newtheorem{remark}[theorem]{Remark}
\title{Multigrid Methods for Hellan-Herrmann-Johnson Mixed Method of Kirchhoff Plate Bending Problems}%
\author{
Long Chen\thanks{Department of Mathematics, University of California at Irvine, Irvine, CA 92697, USA, and Beijing Institute for Scientific and Engineering Computing, Beijing University of Technology, Beijing 100124, China (chenlong@math.uci.edu).
The work of this author was supported by the National Science Foundation (NSF) DMS-1418934, and
in part by the Sea Poly Project of Beijing Overseas Talents and the National Natural Science Foundation of China Project 11671159. This work was finished when the first
author visited Peking University in the fall of 2015. He would like to thank Peking University for
the support and hospitality, as well as for their exciting research atmosphere.
}
\and
Jun Hu\thanks{LMAM and School of Mathematical Sciences, Peking University, Beijing 100871, China (hujun@math.pku.edu.cn). The work of this author was supported by the National Natural Science Foundation of China Projects 11625101,  91430213 and 11421101.}
\and
Xuehai Huang\thanks{Corresponding author. College of Mathematics and Information Science, Wenzhou University, Wenzhou 325035, China (xuehaihuang@gmail.com).
The work of this author was supported by the National Natural Science Foundation of China Projects 11771338 and 11671304, Zhejiang Provincial
Natural Science Foundation of China Projects LY17A010010, LY15A010015 and LY15A010016, and Wenzhou Science and Technology Plan Project G20160019.}
}%
\begin{document}

\maketitle

\begin{abstract}
A V-cycle multigrid method for the Hellan-Herrmann-Johnson (HHJ) discretization of the Kirchhoff plate bending problems is developed in this paper.
It is shown that the contraction number of the V-cycle multigrid HHJ mixed method is bounded away from one uniformly with respect to the mesh size. The uniform convergence is achieved for the V-cycle multigrid method with only one smoothing step and without full elliptic regularity. The key is a stable decomposition of the kernel space which is derived from an exact sequence of the HHJ mixed method, and the strengthened Cauchy Schwarz inequality. Some numerical experiments are provided to confirm the proposed V-cycle multigrid method. The exact sequences of the HHJ mixed method and the corresponding commutative diagram is of some interest independent of the current context.
\end{abstract}

\begin{keywords}
Kirchhoff plates, Hellan-Herrmann-Johnson mixed method, multigrid method, exact sequence, stable decomposition
\end{keywords}


\section{Introduction}
We consider multigrid methods for solving the saddle point system arising from the Hellan-Herrmann-Johnson (HHJ) mixed method discretization (cf.~\cite{Hellan1967, Herrmann1967, Johnson1973}) of a fourth order equation: the Kirchhoff plate bending problem.

Linear systems arising from discretization of fourth order partial differential equations are difficult to solve due to the poor spectral properties.
%
For $C^1$ conforming finite element methods of the biharmonic equation, some multigrid methods are studied in~\cite{Zhang1989, BrambleZhang1995,XuLi1998,Zhao2005}. In practice since it is hard to construct $C^1$ finite elements, nonconforming finite element methods (cf.~\cite{Ciarlet1978, LascauxLesaint1975,WangShiXu2007a}), notably the Morley element (cf.~\cite{Morley1968, WangXu2006, WangShiXu2007a, WangXu2013}), Zienkiewicz element (cf.~\cite{BazeleyCheungIronsZienkiewicz1965, WangShiXu2007}) and Adini element (cf.~\cite{AdiniClough1961, LascauxLesaint1975,WangShiXu2007a}), are favored for the fourth order equation.
Optimal-order nonconforming multigrid methods with the full regularity assumption are developed in~\cite{Brenner1989, PeiskerRustStein1990, ZhouFeng1993,XuLi1999,ShiXu2000}.
Without assuming full elliptic regularity, similar results are obtained in~\cite{Wang1994a, Brenner1999a,Zhao2004}.
For $C^0$ interior penalty methods of fourth order equations in~\cite{BrennerSung2005, EngelGarikipatiHughesLarsonEtAl2002}, it is proved in~\cite{BrennerSung2006} that V-cycle, F-cycle and W-cycle multigrid algorithms are uniform contractions. Standard mutligrid solvers for the Poisson operator are used to design efficient smoothers.
An algebraic multigrid method by smooth aggregation is developed for the fourth order elliptic problems in~\cite{VanvekMandelBrezina1996}.
In all of these work, special intergrid transfer operators are necessary for both these conforming and nonconforming multigrid methods, since either the underlying finite element spaces are non-nested or the quadratic forms are non-inherited. The contraction number of V-cycle, W-cycle or F-cycle multigrid method can be proved to be less than one uniformly with respect to the mesh level provided that the number of smoothing steps is large enough.

We shall develop a multigrid method for the Hellan-Herrmann-Johnson discretization of the Kirchhoff plate bending problems in the mixed form. The resulting linear system is in the following saddle point form
\begin{equation}\label{intro:saddle}
\begin{pmatrix}
M & B^T \\
B & O
\end{pmatrix}
\begin{pmatrix}
\boldsymbol \sigma\\
u
\end{pmatrix}
=
\begin{pmatrix}
0 \\
-f
\end{pmatrix},
\end{equation}
which is considered harder to solve than the symmetric positive counterpart due to the indefiniteness of the saddle point system. To this end, the hybridization technique is applied to the HHJ mixed method by introducing a Lagrange multiplier, which changes the saddle point system to a symmetric positive definite (SPD) problem (cf.~\cite{Veubeke1965, ArnoldBrezzi1985}). It is shown in~\cite{ArnoldBrezzi1985} that the resulted SPD problem in the lowest order HHJ mixed method is equivalent to a modified Morley method. As we mentioned earlier, however, multigrid algorithms for the Morley element method have been only proved to be optimal with special intergrid transfer operators and large enough number of smoothing steps.

We shall apply the approach developed in~\cite{ChenSaddle} to design an effective multigrid methods for solving the equivalent linear system of \eqref{intro:saddle}, whose mixed finite element method is to find $(\widetilde{\boldsymbol{\sigma}}_h, u_h)\in \mathcal{V}_h\times \mathcal{P}_h$ such that
\begin{align}
&a(\widetilde{\boldsymbol{\sigma}}_h,\boldsymbol{\tau})+b(\boldsymbol{\tau}, u_h)=-a(\boldsymbol{\Pi}_h\boldsymbol{\sigma}_0,\boldsymbol{\tau}) \quad \forall\,\boldsymbol{\tau}\in\mathcal{V}_h,  \label{mfemnew1intro}\\
&b(\widetilde{\boldsymbol{\sigma}}_h, v)\quad\quad\quad\quad\;\;\;=0 \quad\quad\quad\quad\quad\quad\; \forall\,v\in \mathcal{P}_h. \label{mfemnew2intro}
\end{align}
The smoother of our multigrid method is a multiplicative Schwarz smoother based on a multilevel decomposition of the null space $\ker(B)$. Since the finite element spaces of the HHJ mixed method are nested, the coarse-to-fine intergrid transfer operator are simply the natural injection.

The key to the analysis and the algorithm is a stable multilevel decomposition of the null space $\ker(\mathrm{div}\boldsymbol{\mathrm{div}})$. To this end, we first establish  exact sequences for the HHJ mixed method of Kirchhoff plates in both continuous and discrete levels.
By the discrete exact sequence,
the mixed method \eqref{mfemnew1intro}-\eqref{mfemnew2intro} is equivalent to find $\boldsymbol{\phi}_h\in \widetilde{\mathcal{S}}_h$ such that \cite{KrendlRafetsederZulehner2016}
\begin{equation*}
a(\nabla ^s\times\boldsymbol{\phi}_h, \nabla ^s\times\boldsymbol{\psi})=-a(\boldsymbol{\Pi}_h\boldsymbol{\sigma}_0, \nabla ^s\times\boldsymbol{\psi}) \quad \forall\,\boldsymbol{\psi}\in\widetilde{\mathcal{S}}_h
\end{equation*}
with $\widetilde{\boldsymbol{\sigma}}_h=\nabla ^s\times\boldsymbol{\phi}_h$.
After achieving a decomposition of the finite element space for the stress based on the discrete exact sequence for the HHJ mixed method,
a stable decomposition and the strengthened Cauchy Schwarz inequality are derived using the standard technique as in~\cite{Xu1992}.
%
Then according to the theoretical results developed in~\cite{ChenSaddle}, the contraction number of our V-cycle multigrid HHJ mixed method is bounded away from one uniformly with respect to the mesh size with even only one smoothing step. Since a stable decomposition is obtained using the $L^2$-projection, the full regularity assumption is not needed neither in our approach. As far as we know, our V-cycle multigrid method is the first work possessing these two merits among the multigrid methods for solving the fourth order partial differential equation directly.

Although the multigrid method used here and its convergence follow from the framework developed in~\cite{ChenSaddle}, this example has special feature which lead to rather difficult analysis than examples considered in~\cite{ChenSaddle}. Furthermore, the Hilbert complex for the HHJ mixed method revealed in this paper is of some interest independent of the current context and will play a central role in the design and analysis of the HHJ mixed method~\cite{ArnoldFalkWinther2006}, c.f. the convergence of adaptive finite element methods for the HHJ mixed method established in \cite{HuangHuangXu2011}. We emphasize such a contribution by listing the commutative diagram for the HHJ mixed method as follows. Details on the spaces and interpolation operators can be found in Section 2.2.
\[
\begin{array}{c}
\xymatrix{
  \overline{\boldsymbol{P}}_1(\Omega; \mathbb{R}^2) \ar[r]^{\subset} & \boldsymbol{H}^{1}(\Omega; \mathbb{R}^2) \ar[d]^{\boldsymbol{I}_h} \ar[r]^-{\nabla ^s\times}
                & \boldsymbol{H}^{-1}(\mathrm{div}\boldsymbol{\mathrm{div}},\Omega; \mathbb{S}) \ar[d]^{\boldsymbol{\Pi}_h}   \ar[r]^-{\mathrm{div}\boldsymbol{\mathrm{div}}} & \ar[d]^{Q_h}H^{-1}(\Omega) \ar[r]^{} & 0 \\
 \overline{\boldsymbol{P}}_1(\Omega; \mathbb{R}^2) \ar[r]^{\subset} & \mathcal{S}_h \ar[r]^{\nabla ^s\times}
                & \mathcal{V}_h   \ar[r]^{(\mathrm{div}\boldsymbol{\mathrm{div}})_h} &  \mathcal{P}_h \ar[r]^{}& 0    }
\end{array}.
\]

%
%

The rest of this paper is organized as follows. The HHJ mixed method for Kirchhoff plates and the corresponding exact sequence and commutative diagram are presented in Section 2.  Then we construct a stable decomposition and prove the strengthened Cauchy Schwarz inequality for the HHJ mixed method in
Section 3. In Section 4, we show and analyze the V-cycle multigrid method for the HHJ mixed method.
Some numerical experiments are given to testify our multigrid method in Section 4 as well.
\section{Mixed Methods for the Plate Bending Problem}
Assume a thin plate occupies a bounded simply connected polygonal domain $\Omega\subset\mathbb{R}^{2}$.
Then the mathematical model describing the deflection $u$ of the plate is governed by (cf.~\cite{FengShi1996,Reddy2006})
\begin{equation}
\left\{
\begin{aligned}
&\mathcal{C}\boldsymbol{\sigma}=-\boldsymbol{\nabla}^2u\;\;\text{in }\Omega,\\
&\div\boldsymbol \div\boldsymbol{\sigma}=-f\;\;\text{in
}\Omega, \\
&u=\partial_{\boldsymbol{n}}u=0\;\;\text{on }\partial\Omega,
\end{aligned}
\right.  \label{kirchhoffplate}%
\end{equation}
where $\boldsymbol{n}$ is the unit
outward normal to $\partial\Omega$, $\boldsymbol{\nabla}$ is the usual gradient
operator, $\div\boldsymbol \div$ stands for the divergence operator acting
on vector-valued (tensor-valued)  functions (cf.~\cite{Reddy2006}).
Here, $\mathcal{C}$ is a symmetric and positive definite operator defined as follows: for any second-order tensor $\boldsymbol{\tau}$,
\[
\mathcal{C}\boldsymbol{\tau}:=\frac{1}{1-\nu}\boldsymbol{\tau}-\frac{\nu}{1-\nu^2}({\rm
tr}\boldsymbol{\tau})
   \mathcal{I}
\]
with $\mathcal{I}$ a second order identity tensor, ${\rm tr}$ the trace operator
acting on second order tensors, and $\nu\in L^{\infty}(\Omega)$ the Poisson ratio satisfying $\inf\limits_{x\in\Omega}\nu\geq 0$ and $\sup\limits_{x\in\Omega}\nu<0.5$.

\subsection{Hellan-Herrmann-Johnson Method}
Denote the space of all symmetric $2\times2$ tensor by $\mathbb{S}$.  Given a bounded domain $G\subset\mathbb{R}^{2}$ and a
non-negative integer $m$, let $H^m(G)$ be the usual Sobolev space of functions
on $G$, and $\boldsymbol{H}^m(G; \mathbb{X})$ be the usual Sobolev space of functions taking values in the finite-dimensional vector space $\mathbb{X}$ for $\mathbb{X}$ being $\mathbb{S}$ or $\mathbb{R}^2$. The corresponding norm and semi-norm are denoted respectively by
$\Vert\cdot\Vert_{m,G}$ and $|\cdot|_{m,G}$.  If $G$ is $\Omega$, we abbreviate
them by $\Vert\cdot\Vert_{m}$ and $|\cdot|_{m}$,
respectively. Let $H_{0}^{m}(G)$ be the closure of $C_{0}^{\infty}(G)$ with
respect to the norm $\Vert\cdot\Vert_{m,G}$.  $P_m(G)$ stands for the set of all
polynomials in $G$ with the total degree no more than $m$, and $\boldsymbol{P}_m(G; \mathbb{X})$ denotes the tensor or vector version of $P_m(G)$ for $\mathbb{X}$ being $\mathbb{S}$ or $\mathbb{R}^2$, respectively.

Let $\{\mathcal {T}_h\}_{h>0}$ be a regular family of triangulations
of $\Omega$.  For each $K\in\mathcal{T}_h$, denote by $\boldsymbol{n}_K = (n_1, n_2)^T$ the
unit outward normal to $\partial K$ and write $\boldsymbol{t}_K:= (t_1, t_2)^T = (-n_2, n_1)^T$, a unit vector
tangent to $\partial K$. Without causing any confusion, we will abbreviate $\boldsymbol{n}_K$ and $\boldsymbol{t}_K$ as $\boldsymbol{n}$ and $\boldsymbol{t}$ respectively for simplicity.
Let $\mathcal{E}_h$ be the union of all edges
of the triangulation $\mathcal {T}_h$ and $\mathcal{E}^i_h$ the union of all
interior edges of the triangulation $\mathcal {T}_h$.
Set for each $K\in\mathcal{T}_h$
\[
\mathcal{E}_h(K):=\{e\in\mathcal{E}_h: e\subset\partial K\}, \quad \mathcal{E}_h^i(K):=\{e\in\mathcal{E}_h^i: e\subset\partial K\}.
\]
For any $e\in\mathcal{E}_h$,
fix a unit normal vector $\boldsymbol{n}_e:= (n_1, n_2)^T$ and a unit tangent vector $\boldsymbol{t}_e := (-n_2, n_1)^T$. For a column vector function $\boldsymbol \phi = (\phi_1, \phi_2)^T$, differential operators for scalar functions will be applied row-wise to produce a matrix function. Similarly for a matrix function, differential operators for vector functions are applied row-wise.
Discrete differential operator $\boldsymbol \div_h$ is defined as
the elementwise counterpart of $\boldsymbol \div$ with respect to the triangulation $\mathcal{T}_h$.
For a second order tensor-valued function $\boldsymbol{\tau}$, set
\[
M_{n}(\boldsymbol{\tau}):=\boldsymbol{n}^T\boldsymbol{\tau}\boldsymbol{n}, \quad M_{nt}(\boldsymbol{\tau}):=\boldsymbol{t}^T\boldsymbol{\tau}\boldsymbol{n},
\]
on each edge $e\in\mathcal{E}_h$.
Next, we introduce jumps on edges. Consider two adjacent triangles $K^+$ and $K^-$ sharing an interior edge $e$.
Denote by $\boldsymbol{n}^+$ and $\boldsymbol{n}^-$ the unit outward normals
to the common edge $e$ of the triangles $K^+$ and $K^-$, respectively.
For a scalar-valued function $v$, write $v^+:=v|_{K^+}$ and $v^-
:=v|_{K^-}$.   Then define jumps on $e$ as
follows:
\[
[v]:=v^+\boldsymbol{n}_e\cdot\boldsymbol{n}^++v^-\boldsymbol{n}_e\cdot\boldsymbol{n}^-.
\]
On an edge $e\subset K$ lying on the boundary $\partial\Omega$, the above term is
defined by
\[
[v]:=v\boldsymbol{n}_e\cdot\boldsymbol{n}_K.
\]
For any second order tensor-valued functions $\boldsymbol{\sigma}$ and $\boldsymbol{\tau}$, set
\[
\boldsymbol{\sigma}:\boldsymbol{\tau}:=\sum_{i,j=1}^2\boldsymbol{\sigma}_{ij}\boldsymbol{\tau}_{ij}.
\]
Throughout this paper, we use ``$\lesssim\cdots $" to mean that ``$\leq C\cdots$", where $C$ is a generic positive constant independent of the mesh size $h$, which may take different values at different appearances.

Then we define some Hilbert spaces.
Based on the triangulation $\mathcal {T}_h$, let
\begin{align*}
&\mathcal{V}:=\left\{\boldsymbol{\tau}\in\boldsymbol{L}^2(\Omega; \mathbb{S}):
\boldsymbol{\tau}|_K\in \boldsymbol{H}^1(K; \mathbb{S}) \quad \forall\,K\in\mathcal{T}_h \textrm{ and } [M_n(\boldsymbol{\tau})]|_e=0\;\, \forall\,e\in\mathcal
{E}_h^i \right\},\\
&\mathcal{P}:=\left\{v\in H_0^1(\Omega): v|_K\in H^2(K)\quad \forall\,K\in\mathcal
{T}_h\right\}.
\end{align*}
The corresponding finite element spaces are given by
\begin{align*}
&\mathcal{S}_h:=\left\{\boldsymbol{\phi}\in\boldsymbol{H}^1(\Omega; \mathbb{R}^2):
\boldsymbol{\phi}|_K\in \boldsymbol{P}_{r}(K; \mathbb{R}^2) \quad \forall\,K\in\mathcal{T}_h \right\},\\
&\mathcal{V}_h:=\left\{\boldsymbol{\tau}\in\mathcal{V}:
\boldsymbol{\tau}|_K\in \boldsymbol{P}_{r-1}(K; \mathbb{S}) \quad \forall\,K\in\mathcal{T}_h \right\},\\
&\mathcal{P}_h:=\left\{v\in H_0^1(\Omega): v|_K\in P_r(K)\quad \forall\,K\in\mathcal
{T}_h\right\}
\end{align*}
with integer $r\geq1$.

With previous preparation, the Hellan-Herrmann-Johnson (HHJ) mixed method (cf.~\cite{Hellan1967, Herrmann1967, Johnson1973}) for problem~\eqref{kirchhoffplate} is given as follows: Find $(\boldsymbol{\sigma}_h, u_h)\in \mathcal{V}_h\times \mathcal{P}_h$ such that
\begin{align}
&a(\boldsymbol{\sigma}_h,\boldsymbol{\tau})+b(\boldsymbol{\tau}, u_h)=0 \quad\quad\quad\quad\quad\quad \forall\,\boldsymbol{\tau}\in\mathcal{V}_h, \label{mfem1} \\
&b(\boldsymbol{\sigma}_h, v)\quad\quad\quad\quad\quad=-\int_{\Omega}fv\, {\rm d}x \quad\;\;\; \forall\,v\in \mathcal{P}_h, \label{mfem2}
\end{align}
where
\begin{align*}
a(\boldsymbol{\sigma},\boldsymbol{\tau})&:=\int_{\Omega}\mathcal{C}\boldsymbol{\sigma}:\boldsymbol{\tau}\,{\rm d}x \quad \forall\,\boldsymbol{\sigma},\boldsymbol{\tau}\in \mathcal{V},\\
b(\boldsymbol{\tau}, v)&:= -\int_{\Omega}(\boldsymbol \div_h\boldsymbol{\tau})\cdot\boldsymbol{\nabla}v\, {\rm d}x + \sum_{K\in\mathcal{T}_h}\int_{\partial K}M_{nt}(\boldsymbol{\tau})\partial_{\boldsymbol{t}}v\, {\rm d}s\quad \forall\,\boldsymbol{\tau}\in \mathcal{V}, v\in \mathcal{P}.
\end{align*}

The boundary condition for the deflection $u=0$ on $\partial \Omega$ is imposed into the space $\mathcal P_h$ whereas the boundary condition for the rotation $\partial_{\boldsymbol n} u=0$ on $\partial \Omega$ is imposed weakly in the variational form \eqref{mfem1}.
If the plate is simply supported along the boundary, i.e. the boundary condition is now $u=0, M_n(\boldsymbol{\sigma})=0$ on $\partial\Omega$, we only need to modify $\mathcal{V}$ as
\[
\mathcal{V}_0:=\left\{\boldsymbol{\tau}\in\mathcal{V}:
M_n(\boldsymbol{\tau})=0 \textrm{ on } \partial\Omega \right\}.
\]

It was shown in~\cite{BabuvskaOsbornPitkaranta1980, FalkOsborn1980,BoffiBrezziFortin2013} that the HHJ mixed method~\eqref{mfem1}-\eqref{mfem2} is well posed. And the inf-sup condition holds as follows (cf. \cite[Lemma 4.2]{HuangHuangXu2011})
\[
\|v_h\|_{2,h}\lesssim\sup_{\boldsymbol{\tau}_h\in\mathcal{V}_h}\frac{b(\boldsymbol{\tau}_h, v_h)}{\|\boldsymbol{\tau}_h\|_{0,h}}\quad \forall~v_h\in\mathcal{P}_h.
\]
where mesh dependent norms are
\begin{align*}
\|v\|_{2,h}^2&:=\sum_{K\in\mathcal{T}_h}\|v\|_{2,K}^2+\sum_{e\in\mathcal{E}_h}h_e^{-1}\|[\partial_{\boldsymbol{n}_e}v]\|_{0,e}^2,\\
\|\boldsymbol{\tau}\|_{0,h}^2&:=\|\boldsymbol{\tau}\|_0^2+\sum_{e\in\mathcal{E}_h}h_e\|M_n(\boldsymbol{\tau})\|_{0,e}^2.
\end{align*}
And it possesses the optimal {\it a priori} error estimates provided that $\boldsymbol \sigma$ and $u$ are smooth enough:
\begin{align*}
\|\boldsymbol{\sigma}-\boldsymbol{\sigma}_h\|_0&\lesssim h^r\|\boldsymbol{\sigma}\|_{r},\\
\|u-u_h\|_1&\lesssim h^r(\|\boldsymbol{\sigma}\|_{r}+\|u\|_{r+1}).
\end{align*}

Reliable and efficient {\it a posteriori} error estimators, as well as the convergence of an adaptive  HHJ mixed method, can be found in~\cite{HuangHuangXu2011}.

\subsection{Hilbert Complex for the HHJ Mixed Method}
In this section, we shall derive the exact sequence and commutative diagram for the HHJ mixed method~\eqref{mfem1}-\eqref{mfem2}.


For a vector-valued function $\boldsymbol{\phi}=(\phi_1, \phi_2)^{T}$,
denote by $\boldsymbol{\phi}^{\perp}:=(-\phi_2, \phi_1)^T$ the vector perpendicular to $\boldsymbol{\phi}$.
The standard symmetric gradient operator is
\[
\boldsymbol{\varepsilon}(\boldsymbol{\phi})=\frac{1}{2}\left(\boldsymbol{\nabla}\boldsymbol{\phi}+(\boldsymbol{\nabla}\boldsymbol{\phi})^T\right).
\]
The symmetric curl operator will be defined analogically by
$$
\nabla ^s\times\boldsymbol \phi :=
\frac{1}{2}\left ( \mathbf{curl} \boldsymbol \phi +  (\mathbf{curl}\boldsymbol \phi)^T\right ).
$$
%
Let
\[
\overline{\boldsymbol{P}}_1(\Omega; \mathbb{R}^2):=\textrm{span}\left\{
\left(\begin{array}{c}
0 \\
1
\end{array}
\right), \left(\begin{array}{c}
1 \\
0
\end{array}
\right),  \left(\begin{array}{c}
x_1 \\
x_2
\end{array}
\right)\right\}.
\]
It is easy to see that $\overline{\boldsymbol{P}}_1^{\textrm{Rot}}(\Omega; \mathbb{R}^2)$ is exactly the rigid body motion space where
\[
\overline{\boldsymbol{P}}_1^{\textrm{Rot}}(\Omega; \mathbb{R}^2):=\{\boldsymbol{\phi}\in\boldsymbol{L}^2(\Omega; \mathbb{R}^2): \boldsymbol{\phi}^{\perp}\in\overline{\boldsymbol{P}}_1(\Omega; \mathbb{R}^2)\}.
\]

\begin{lemma}\label{lem:1}
The following sequence for Kirchhoff plates
\begin{equation}\label{exactsequence1}
\overline{\boldsymbol{P}}_1(\Omega; \mathbb{R}^2)\autorightarrow{$\subset$}{} \boldsymbol{C}^{\infty}(\Omega; \mathbb{R}^2)\autorightarrow{$\nabla ^s\times$}{} \boldsymbol{C}^{\infty}(\Omega; \mathbb{S}) \autorightarrow{$\mathrm{div}\boldsymbol{\mathrm{div}}$}{} C^{\infty}(\Omega)\autorightarrow{}{}0
\end{equation}
is an exact complex.
\end{lemma}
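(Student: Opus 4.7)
The plan is to verify the four ingredients of exactness: (i) $\ker(\nabla^s\times)=\overline{\boldsymbol{P}}_1(\Omega;\mathbb{R}^2)$; (ii) $\mathrm{div}\boldsymbol{\mathrm{div}}\circ\nabla^s\times=0$; (iii) $\ker(\mathrm{div}\boldsymbol{\mathrm{div}})\subset\operatorname{range}(\nabla^s\times)$; and (iv) $\mathrm{div}\boldsymbol{\mathrm{div}}$ surjects onto $C^\infty(\Omega)$. Items (i) and (ii) follow by direct computation from the explicit formula
\[
\nabla^s\times\boldsymbol{\phi}=\frac{1}{2}\begin{pmatrix}2\partial_2\phi_1 & \partial_2\phi_2-\partial_1\phi_1\\ \partial_2\phi_2-\partial_1\phi_1 & -2\partial_1\phi_2\end{pmatrix}.
\]
For (i), the three generators of $\overline{\boldsymbol{P}}_1$ visibly map to zero, and conversely $\nabla^s\times\boldsymbol{\phi}=0$ forces $\phi_1=\phi_1(x_1)$, $\phi_2=\phi_2(x_2)$ with common derivative $c$, so $\boldsymbol{\phi}\in\overline{\boldsymbol{P}}_1$ on the connected $\Omega$. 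For (ii) the expansion $\partial_1^2\partial_2\phi_1+\partial_1\partial_2(\partial_2\phi_2-\partial_1\phi_1)-\partial_1\partial_2^2\phi_2$ cancels term by term. For (iv), given $f\in C^\infty(\Omega)$ I would extend to $\tilde f\in C_c^\infty(\mathbb{R}^2)$, convolve with the biharmonic fundamental solution to produce a smooth $w$ with $\Delta^2 w=\tilde f$, and take $\boldsymbol{\tau}:=\boldsymbol{\nabla}^2 w\in\boldsymbol{C}^\infty(\Omega;\mathbb{S})$; then $\mathrm{div}\boldsymbol{\mathrm{div}}\,\boldsymbol{\tau}=\Delta^2 w=f$ on $\Omega$.

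The main obstacle is (iii), the construction of a vector potential. My preferred approach is to reduce it to the classical two-dimensional elasticity (Kr\"oner) complex
\[
\mathrm{RM}\hookrightarrow \boldsymbol{C}^\infty(\Omega;\mathbb{R}^2)\xrightarrow{\boldsymbol{\varepsilon}}\boldsymbol{C}^\infty(\Omega;\mathbb{S})\xrightarrow{\operatorname{inc}}C^\infty(\Omega)\to 0,
\]
whose exactness on a simply connected domain is classical (Saint-Venant compatibility, equivalently Ces\`aro--Volterra integration). The bridge is a pair of involutive linear isomorphisms: $J\boldsymbol{\phi}:=\boldsymbol{\phi}^\perp$ on vector fields, and $T\boldsymbol{A}:=\begin{pmatrix}A_{22} & -A_{12}\\ -A_{12} & A_{11}\end{pmatrix}$ on symmetric tensors. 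A short calculation yields the intertwining identities
\[
\nabla^s\times\boldsymbol{\phi}=T\,\boldsymbol{\varepsilon}(J\boldsymbol{\phi}),\qquad \mathrm{div}\boldsymbol{\mathrm{div}}(T\boldsymbol{A})=\operatorname{inc}\boldsymbol{A},
\]
and $J$ sends $\overline{\boldsymbol{P}}_1$ bijectively onto $\mathrm{RM}=\{\boldsymbol{a}+b(-x_2,x_1)^T\}$. Thus the sequence of the lemma is exactly the elasticity complex transported through $J$ and $T$, and exactness transfers verbatim; concretely, for (iii) one sets $\boldsymbol{A}:=T\boldsymbol{\tau}$ (so $\operatorname{inc}\boldsymbol{A}=0$), picks $\boldsymbol{\psi}$ with $\boldsymbol{\varepsilon}(\boldsymbol{\psi})=\boldsymbol{A}$, and takes $\boldsymbol{\phi}:=-J\boldsymbol{\psi}$.

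If a self-contained proof of (iii) is desired, one can argue directly: since $\mathrm{div}\boldsymbol{\mathrm{div}}\boldsymbol{\tau}=0$, the vector $\mathrm{div}\,\boldsymbol{\tau}$ is divergence-free on the simply connected $\Omega$ and hence equals $\mathbf{curl}\,w$ for some $w\in C^\infty(\Omega)$; subtracting $\pm w$ from the off-diagonal entries of rows 1 and 2 of $\boldsymbol{\tau}$ makes each row divergence-free, so each row is $\mathbf{curl}$ of a scalar potential $\alpha$, resp.\ $\gamma$; the symmetry $\tau_{12}=\tau_{21}$ automatically forces the compatibility relation $\partial_1\alpha+\partial_2\gamma=2w$, and then a short check verifies that $\boldsymbol{\phi}=(\alpha,\gamma)^T$ satisfies $\nabla^s\times\boldsymbol{\phi}=\boldsymbol{\tau}$. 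In either route, simple connectedness of $\Omega$ is the sole topological hypothesis used.
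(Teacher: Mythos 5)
Your proposal is correct in substance, and for the key step --- exactness at $\boldsymbol{C}^{\infty}(\Omega;\mathbb{S})$ --- your preferred route differs from the paper's in an instructive way. The paper argues directly: since $\div\boldsymbol\div\boldsymbol{\tau}=0$, the field $\boldsymbol\div\boldsymbol{\tau}$ is divergence-free, hence equals $\mathbf{curl}\,v=-\boldsymbol\div(v\boldsymbol{L})$ for a scalar $v$; then $\boldsymbol{\tau}+v\boldsymbol{L}$ is row-wise divergence-free, so it equals $\mathbf{curl}\,\boldsymbol{\phi}$, and symmetry of $\boldsymbol{\tau}$ kills the antisymmetric correction $v\boldsymbol{L}$ upon symmetrization. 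This is precisely your ``self-contained'' Route B written in matrix form. Your primary route instead observes that $T\boldsymbol{A}=\boldsymbol{L}^{T}\boldsymbol{A}\boldsymbol{L}$ and $J\boldsymbol{\phi}=\boldsymbol{\phi}^{\perp}$ intertwine the entire sequence with the classical strain (Saint-Venant) complex, so exactness at every spot is inherited wholesale from Ces\`aro--Volterra integration. The paper does use the identity $\nabla ^s\times\boldsymbol{\phi}=\boldsymbol{L}^{T}\boldsymbol{\varepsilon}(\boldsymbol{\phi}^{\perp})\boldsymbol{L}$ --- your $T\,\boldsymbol{\varepsilon}(J\boldsymbol{\phi})$ --- but only to identify $\ker(\nabla ^s\times)$; pushing the conjugation through all three spots, as you do, is arguably cleaner and turns the paper's later remark (that these sequences are rotations of the two-dimensional elasticity complex) into the proof itself. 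Both are valid; your version outsources the potential construction to the Saint-Venant theorem, while the paper reproves it with two applications of the two-dimensional Poincar\'e lemma.

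One step of yours does not survive scrutiny: in the surjectivity argument, a function $f\in C^{\infty}(\Omega)$ need not extend to $C_c^{\infty}(\mathbb{R}^2)$ (it may blow up at $\partial\Omega$), so ``extend and convolve with the biharmonic fundamental solution'' fails in general. The idea $\boldsymbol{\tau}=\boldsymbol{\nabla}^2w$ with $\Delta^2w=f$ is fine, but producing such a $w$ requires either the surjectivity of elliptic constant-coefficient operators on $C^{\infty}$ of an arbitrary open set (Malgrange), or --- as the paper does --- composing the surjectivity of $\boldsymbol\div$ on $\boldsymbol{C}^{\infty}(\Omega;\mathbb{S})$ from the elasticity complex with that of $\div$ on $\boldsymbol{C}^{\infty}(\Omega;\mathbb{R}^2)$ from the de Rham complex. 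Replace the extension step by one of these and the argument is complete.
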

\begin{proof}
By direct computation, it is easy to see that \eqref{exactsequence1} is a complex, i.e. $\nabla ^s\times(\overline{\boldsymbol{P}}_1) = 0$ and $\mathrm{div}\boldsymbol{\mathrm{div}}~\nabla ^s\times = 0$. We then verify the exactness.

Let us first show that $\ker(\nabla ^s\times)=\overline{\boldsymbol{P}}_1(\Omega; \mathbb{R}^2)$. For any $\boldsymbol{\phi}\in\boldsymbol{C}^{\infty}(\Omega; \mathbb{R}^2)$ satisfying $\nabla ^s\times\boldsymbol{\phi}=\boldsymbol{0}$, it holds
\[
\nabla ^s\times\boldsymbol{\phi} = \boldsymbol{L}^T\boldsymbol{\varepsilon}(\boldsymbol \phi^{\bot})\boldsymbol{L}=\boldsymbol{0}.
\]
where $\boldsymbol{L}=\left(
\begin{array}{cc}
0 & -1 \\
1 & 0
\end{array}
\right)$.
Thus we have
\[
\boldsymbol{\varepsilon}(\boldsymbol \phi^{\bot})=\boldsymbol{0},
\]
which implies $\boldsymbol{\phi}\in\overline{\boldsymbol{P}}_1(\Omega; \mathbb{R}^2)$.

Next we demonstrate that $\ker(\mathrm{div}\boldsymbol{\mathrm{div}})=\nabla ^s\times\boldsymbol{C}^{\infty}(\Omega; \mathbb{R}^2)$ using the similar argument adopted in~\cite[Lemma~1]{Beirao-da-VeigaNiiranenStenberg2007} and \cite[Lemma~3.1]{HuangHuangXu2011}.
First of all, $\nabla ^s\times\boldsymbol{C}^{\infty}(\Omega; \mathbb{R}^2) \subset \ker(\mathrm{div}\boldsymbol{\mathrm{div}})$ by direct computation.
For any $\boldsymbol{\tau}\in \ker(\mathrm{div}\boldsymbol{\mathrm{div}})$, there exists $v\in C^{\infty}(\Omega)$ such that
$\boldsymbol{\mathrm{div}}\boldsymbol{\tau}=\mathbf{curl}v=-\boldsymbol{\mathrm{div}}(v\boldsymbol{L}),$
which implies
$\boldsymbol{\mathrm{div}}(\boldsymbol{\tau}+v\boldsymbol{L})=0.$
Hence
there exists a vector function $\boldsymbol{\phi}\in\boldsymbol{C}^{\infty}(\Omega; \mathbb{R}^2)$ satisfying
\[
\boldsymbol{\tau}+v\boldsymbol{L}=\mathbf{curl}\boldsymbol{\phi}.
\]
Since $\boldsymbol{\tau}$ is symmetric, we have $\boldsymbol{\tau}=\nabla ^s\times\boldsymbol{\phi}$.
Thus $\ker(\mathrm{div}\boldsymbol{\mathrm{div}})\subset \nabla ^s\times\boldsymbol{C}^{\infty}(\Omega; \mathbb{R}^2)$.

Finally we show that $\mathrm{div}\boldsymbol{\mathrm{div}}\boldsymbol{C}^{\infty}(\Omega; \mathbb{S})=C^{\infty}(\Omega)$. By the elasticity complex in \cite[p. 405]{ArnoldWinther2002}, the divergence operator $\boldsymbol \div :\boldsymbol{C}^{\infty}(\Omega; \mathbb{S})\to \boldsymbol{C}^{\infty}(\Omega; \mathbb{R}^2)$ is surjective. And due to the de Rham complex in \cite[p. 27]{ArnoldFalkWinther2006}, the divergence operator $\div :\boldsymbol{C}^{\infty}(\Omega; \mathbb{R}^2)\to C^{\infty}(\Omega)$ is also surjective. Hence we have $\mathrm{div}\boldsymbol{\mathrm{div}}\boldsymbol{C}^{\infty}(\Omega; \mathbb{S})=C^{\infty}(\Omega)$.
\end{proof}

We then derive an exact sequence with less smoothness. To this end, we define $B: \mathcal V\to \mathcal P^{\prime}$ as
$$
\langle B\boldsymbol{\tau}, v\rangle :=b(\boldsymbol{\tau}, v) \quad \forall~ v\in \mathcal P.
$$
For any $(\boldsymbol{\tau}, v)\in\mathcal V\times \mathcal P$ with $v\in H_0^2(\Omega)$, it follows from an integration by parts and the fact $\left [M_n(\boldsymbol{\tau}) \right ]|_{\mathcal{E}_h^i}=0$ that
\begin{align}
\langle B\boldsymbol{\tau}, v\rangle =& \int_{\Omega}\boldsymbol{\tau}:\boldsymbol{\nabla}^2v\, {\rm d}x - \sum_{K\in\mathcal{T}_h}\int_{\partial K}(\boldsymbol{\tau}\boldsymbol{n})\cdot\boldsymbol{\nabla}v\, {\rm d}s+\sum_{K\in\mathcal{T}_h}\int_{\partial K}M_{nt}(\boldsymbol{\tau})\partial_{\boldsymbol{t}}v\, {\rm d}s \notag\\
=&\int_{\Omega}\boldsymbol{\tau}:\boldsymbol{\nabla}^2v\, {\rm d}x - \sum_{K\in\mathcal{T}_h}\int_{\partial K}M_{n}(\boldsymbol{\tau})\partial_{\boldsymbol{n}}v\, {\rm d}s \notag\\
=&\int_{\Omega}\boldsymbol{\tau}:\boldsymbol{\nabla}^2v\, {\rm d}x=\langle \div\boldsymbol \div\boldsymbol{\tau}, v\rangle_{H^{-2}(\Omega)\times H_0^2(\Omega)}. \label{eq:temp8}
\end{align}
On the other side,
for any $(\boldsymbol{\tau}, v)\in\mathcal V\times \mathcal P$ with $\boldsymbol{\tau}\in \boldsymbol H(\boldsymbol \div,\Omega; \mathbb S): = \{\boldsymbol \tau \in L^2(\Omega; \mathbb S): \boldsymbol\div \boldsymbol \tau \in \boldsymbol{L}^{2}(\Omega; \mathbb{R}^2)\}$, since $v\in\mathcal P$ implies $v$ being continuous in $\Omega$, it follows from the fact $\left [M_{nt}(\boldsymbol{\tau}) \right ]|_{\mathcal{E}_h^i}=0$ that
\begin{align*}
\langle B\boldsymbol{\tau}, v\rangle =&-\int_{\Omega}(\boldsymbol \div\boldsymbol{\tau})\cdot\boldsymbol{\nabla}v\, {\rm d}x + \sum_{K\in\mathcal{T}_h}\int_{\partial K}M_{nt}(\boldsymbol{\tau})\partial_{\boldsymbol{t}}v\, {\rm d}s \\
=&-\int_{\Omega}(\boldsymbol \div\boldsymbol{\tau})\cdot\boldsymbol{\nabla}v\, {\rm d}x=\langle \div\boldsymbol \div\boldsymbol{\tau}, v\rangle_{H^{-1}(\Omega)\times H_0^1(\Omega)}.
\end{align*}
Therefore the bilinear form $b(\cdot,\cdot)$ can be defined either on $\boldsymbol H(\boldsymbol \div,\Omega; \mathbb S)\times H_0^1(\Omega)$ as $B=\div\boldsymbol \div$ in $H^{-1}(\Omega)$ distribution sense or $\boldsymbol L^2(\Omega)\times H^{2}_0(\Omega)$ with $B=\div\boldsymbol \div$ in $H^{-2}(\Omega)$ distribution sense. However, conforming finite element spaces of $\boldsymbol H(\boldsymbol \div,\Omega; \mathbb S)$ or $H^{2}_0(\Omega)$ are difficult to construct. Until this century, $\boldsymbol H(\boldsymbol \div,\Omega; \mathbb S)$ conforming mixed
finite elements with polynomial shape functions were constructed in \cite{Hu2015a,HuZhang2015c, HuZhang2015, ChenHuHuang2017, HuZhang2016, ArnoldWinther2002, AdamsCockburn2005, ArnoldAwanouWinther2008}, and an efficient fast solver on general shape-regular unstructured meshes was recently developed in \cite{ChenHuHuang2017a}.
We strike a balance of the smoothness of these two spaces and understand the bilinear form $b(\cdot,\cdot)$ being defined on $\mathcal V\times \mathcal P$ and thus
$$
\mathrm{div}\boldsymbol{\mathrm{div}}: \boldsymbol{H}^{-1}(\mathrm{div}\boldsymbol{\mathrm{div}},\Omega; \mathbb{S}) \to  H^{-1}(\Omega)
$$
with space $\boldsymbol{H}^{-1}(\div\boldsymbol \div,\Omega; \mathbb S): = \{\boldsymbol \tau \in L^2(\Omega; \mathbb S): \div \boldsymbol\div \boldsymbol \tau \in H^{-1}(\Omega)\}$ which was firstly introduced in \cite{PechsteinSchoberl2011}.

Making use of the similar argument as in Lemma~\ref{lem:1}, we can acquire a Hilbert sequence for Kirchhoff plates as follows.
\begin{lemma}
The following Hilbert sequence for Kirchhoff plates
\begin{equation}\label{eq:hhjexactsequencecontinuous}
\overline{\boldsymbol{P}}_1(\Omega; \mathbb{R}^2)\autorightarrow{$\subset$}{} \boldsymbol{H}^1(\Omega; \mathbb{R}^2)\autorightarrow{$\nabla ^s\times$}{} \boldsymbol{H}^{-1}(\mathrm{div}\boldsymbol{\mathrm{div}},\Omega; \mathbb{S}) \autorightarrow{$\mathrm{div}\boldsymbol{\mathrm{div}}$}{} H^{-1}(\Omega)\autorightarrow{}{}0
\end{equation}
is an exact complex.
\end{lemma}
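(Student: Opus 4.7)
The proof will follow the template of Lemma~\ref{lem:1}, lifting each step from the $C^\infty$ setting to the appropriate Sobolev regularity by invoking classical Poincar\'e/de~Rham lemmas. The complex property is essentially inherited: for $\boldsymbol{\phi}\in\overline{\boldsymbol{P}}_1(\Omega;\mathbb{R}^2)$, Lemma~\ref{lem:1} already gives $\nabla^s\times\boldsymbol{\phi}=\boldsymbol{0}$, and for any $\boldsymbol{\phi}\in\boldsymbol{H}^1(\Omega;\mathbb{R}^2)$ the distributional identity $\mathrm{div}\boldsymbol{\mathrm{div}}(\nabla^s\times\boldsymbol{\phi})=0$ still holds, so $\nabla^s\times\boldsymbol{\phi}\in\boldsymbol{L}^2(\Omega;\mathbb{S})\subset\boldsymbol{H}^{-1}(\mathrm{div}\boldsymbol{\mathrm{div}},\Omega;\mathbb{S})$ lies in the kernel of the next map. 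For exactness at $\boldsymbol{H}^1(\Omega;\mathbb{R}^2)$, the algebraic identity $\nabla^s\times\boldsymbol{\phi}=\boldsymbol{L}^T\boldsymbol{\varepsilon}(\boldsymbol{\phi}^{\perp})\boldsymbol{L}$ used in Lemma~\ref{lem:1} is valid distributionally for all $\boldsymbol{\phi}\in\boldsymbol{H}^1$, so $\nabla^s\times\boldsymbol{\phi}=\boldsymbol{0}$ forces $\boldsymbol{\varepsilon}(\boldsymbol{\phi}^{\perp})=\boldsymbol{0}$; the classical Korn rigidity characterization then yields $\boldsymbol{\phi}^{\perp}\in\overline{\boldsymbol{P}}_1(\Omega;\mathbb{R}^2)$, hence $\boldsymbol{\phi}\in\overline{\boldsymbol{P}}_1(\Omega;\mathbb{R}^2)$.

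The middle exactness is the most delicate step and is the main obstacle. I would mirror the two-step potential construction of Lemma~\ref{lem:1} one level lower in regularity. Given $\boldsymbol{\tau}\in\boldsymbol{H}^{-1}(\mathrm{div}\boldsymbol{\mathrm{div}},\Omega;\mathbb{S})$ with $\mathrm{div}\boldsymbol{\mathrm{div}}\,\boldsymbol{\tau}=0$ in $H^{-1}(\Omega)$, first note that $\boldsymbol{\tau}\in\boldsymbol{L}^2$ forces $\boldsymbol{\div}\boldsymbol{\tau}\in\boldsymbol{H}^{-1}(\Omega;\mathbb{R}^2)$, while the hypothesis reads $\div(\boldsymbol{\div}\boldsymbol{\tau})=0$. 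Since $\Omega$ is simply connected, the $H^{-1}$ de~Rham lemma provides $v\in L^2(\Omega)$ with $\boldsymbol{\div}\boldsymbol{\tau}=\mathbf{curl}\,v=-\boldsymbol{\div}(v\boldsymbol{L})$, so that $\boldsymbol{\tau}+v\boldsymbol{L}\in\boldsymbol{L}^2(\Omega)$ is row-wise divergence-free. A second de~Rham step, now at the $L^2$--$\boldsymbol{H}^1$ level, delivers $\boldsymbol{\phi}\in\boldsymbol{H}^1(\Omega;\mathbb{R}^2)$ with $\boldsymbol{\tau}+v\boldsymbol{L}=\mathbf{curl}\,\boldsymbol{\phi}$. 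Taking symmetric parts and using antisymmetry of $\boldsymbol{L}$ together with symmetry of $\boldsymbol{\tau}$ produces $\boldsymbol{\tau}=\nabla^s\times\boldsymbol{\phi}$.

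For surjectivity of $\mathrm{div}\boldsymbol{\mathrm{div}}$, the cleanest route is a direct biharmonic argument. Given any $g\in H^{-1}(\Omega)\subset H^{-2}(\Omega)$, Lax--Milgram produces a unique $u\in H_0^2(\Omega)$ with $\int_{\Omega}\boldsymbol{\nabla}^2 u:\boldsymbol{\nabla}^2 w\,{\rm d}x=\langle g,w\rangle$ for all $w\in H_0^2(\Omega)$. Setting $\boldsymbol{\tau}:=\boldsymbol{\nabla}^2 u\in\boldsymbol{L}^2(\Omega;\mathbb{S})$ and testing against $\varphi\in C_c^{\infty}(\Omega)\subset H_0^2(\Omega)$ shows $\mathrm{div}\boldsymbol{\mathrm{div}}\,\boldsymbol{\tau}=g$ as distributions; since $g\in H^{-1}(\Omega)$ by hypothesis, this places $\boldsymbol{\tau}$ in $\boldsymbol{H}^{-1}(\mathrm{div}\boldsymbol{\mathrm{div}},\Omega;\mathbb{S})$ and realizes the prescribed image. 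An alternative, closer in spirit to Lemma~\ref{lem:1}, would chain the $L^2$-surjectivity of $\div:\boldsymbol{L}^2(\Omega;\mathbb{R}^2)\to H^{-1}(\Omega)$ with that of $\boldsymbol{\div}:\boldsymbol{H}(\boldsymbol{\div},\Omega;\mathbb{S})\to\boldsymbol{L}^2(\Omega;\mathbb{R}^2)$ from the elasticity complex. The recurring technical point throughout is that all invoked de~Rham/Hodge statements are indeed available at exactly the specified Sobolev regularities on the simply connected polygonal domain $\Omega$.
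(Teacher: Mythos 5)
Your proposal is correct, and it is essentially the program the paper itself intends: the paper offers no proof of this lemma beyond the remark that one uses ``the similar argument as in Lemma~\ref{lem:1},'' and your write-up is a faithful execution of that template at the right Sobolev levels --- the identity $\nabla^s\times\boldsymbol{\phi}=\boldsymbol{L}^T\boldsymbol{\varepsilon}(\boldsymbol{\phi}^{\perp})\boldsymbol{L}$ plus Korn rigidity for exactness at $\boldsymbol{H}^1$, and the two-step potential construction (first producing $v$ with $\boldsymbol{\div}\boldsymbol{\tau}=\mathbf{curl}\,v$, then a stream potential for $\boldsymbol{\tau}+v\boldsymbol{L}$) for the kernel of $\mathrm{div}\boldsymbol{\mathrm{div}}$, now resting on the $H^{-1}$ and $L^2$ versions of the de~Rham lemma on a simply connected domain. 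The one place you genuinely depart from the Lemma~\ref{lem:1} template is surjectivity: the paper's smooth argument chains the surjectivity of $\boldsymbol{\div}$ on $\boldsymbol{C}^\infty(\Omega;\mathbb{S})$ with that of $\div$ on $\boldsymbol{C}^\infty(\Omega;\mathbb{R}^2)$, whereas your primary route solves the weak biharmonic problem in $H_0^2(\Omega)$ and takes $\boldsymbol{\tau}=\boldsymbol{\nabla}^2u$. This is a worthwhile improvement: it produces the preimage in one stroke, needs only Lax--Milgram (no elliptic regularity), and sidesteps the question of whether the chained surjectivities hold at exactly the $L^2$/$H^{-1}$ levels required --- a point you would otherwise have to check (it does hold, via the elasticity solve $\boldsymbol{\tau}=\boldsymbol{\varepsilon}(\boldsymbol{u})$, as you note in your alternative). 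Both routes are valid; yours is the more self-contained at this regularity.
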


\begin{remark}\rm
A less smooth exact Hilbert sequence for Kirchhoff plates is
\begin{equation}\label{eq:lesssmoothexactsequencecontinuous}
\overline{\boldsymbol{P}}_1(\Omega; \mathbb{R}^2)\autorightarrow{$\subset$}{} \boldsymbol{L}^2(\Omega; \mathbb{R}^2)\autorightarrow{$\nabla ^s\times$}{} \boldsymbol{H}^{-2}(\mathrm{div}\boldsymbol{\mathrm{div}},\Omega; \mathbb{S}) \autorightarrow{$\mathrm{div}\boldsymbol{\mathrm{div}}$}{} H^{-2}(\Omega)\autorightarrow{}{}0,
\end{equation}
where $\boldsymbol{H}^{-2}(\div\boldsymbol \div,\Omega; \mathbb S): = \{\boldsymbol \tau \in \boldsymbol H^{-1}(\Omega; \mathbb S): \div \boldsymbol\div \boldsymbol \tau \in H^{-2}(\Omega)\}$.
Finite element spaces of $\boldsymbol{H}^{-2}(\div\boldsymbol \div,\Omega; \mathbb S)$ is, however, difficult to construct. Indeed in the HHJ mixed method, the space $\mathcal V$ and $\mathcal V_h$ are not subspaces of $\boldsymbol{H}^{-1}(\div\boldsymbol \div,\Omega; \mathbb S)$ neither. That is, the HHJ mixed method is still a non-conforming method. $\Box$
\end{remark}

\begin{remark}\rm
The dual complex of \eqref{eq:lesssmoothexactsequencecontinuous} is
\[
0\autorightarrow{$\subset$}{} H_0^2(\Omega)\autorightarrow{$\boldsymbol\nabla^2$}{} \boldsymbol{H}_0(\boldsymbol{\mathrm{rot}},\Omega; \mathbb{S}) \autorightarrow{$\boldsymbol{\mathrm{rot}}$}{} \boldsymbol{L}_0^2(\Omega; \mathbb{R}^2)\autorightarrow{}{}0,
\]
where
\[
\boldsymbol H_0(\mathbf{rot},\Omega; \mathbb S): = \{\boldsymbol \tau \in L^2(\Omega; \mathbb S): \mathbf{rot} \boldsymbol \tau \in \boldsymbol{L}^{2}(\Omega; \mathbb{R}^2),\, \textrm{ and }\,\boldsymbol \tau\boldsymbol t=\boldsymbol 0 \, \textrm{ on }\,\partial \Omega\},
\]
\[
\boldsymbol{L}_0^2(\Omega; \mathbb{R}^2): = \{\boldsymbol \phi \in \boldsymbol{L}^2(\Omega; \mathbb{R}^2): \int_{\Omega} \boldsymbol \phi\,\, {\rm d}x=\boldsymbol 0\}.
\]
It is interesting to notice that the last exact sequence is an rotation of the elasticity complex in two dimensions~\cite[(2.1)]{ArnoldWinther2002}. $\Box$
\end{remark}

\medskip

In the discrete level, we shall derive a similar exact sequence for the finite element spaces introduced before. To this end, we first discuss the discretization of the two differential operators $\nabla ^s\times$ and $\mathrm{div}\boldsymbol{\mathrm{div}}$. Since $\nabla ^s\times$ only requires the $H^1$ smoothness, it can be naturally discretized by choosing the finite element space $\mathcal S_h \subset H^1$. The difficulty is the discretization of operator $\mathrm{div}\boldsymbol{\mathrm{div}}$. First we can understand $B : \mathcal{V}_h\to \mathcal{P}_h^{\prime}$ as
\[
\langle B\boldsymbol{\tau}, v\rangle :=b(\boldsymbol{\tau}, v) \quad \forall~ v\in \mathcal{P}_h.
\]
%
Using the Riesz representation induced by the $L^2$-inner product, we can identify $\mathcal{P}_h^{\prime}$ with $\mathcal P_h$ and finally
define $(\mathrm{div}\boldsymbol{\mathrm{div}})_h: \mathcal{V}_h\to \mathcal{P}_h$ as follows: for any $\boldsymbol{\tau}\in\mathcal{V}_h$, $(\mathrm{div}\boldsymbol{\mathrm{div}})_h\boldsymbol{\tau}\in\mathcal{P}_h$ is uniquely determined by
\[
\int_{\Omega}(\mathrm{div}\boldsymbol{\mathrm{div}})_h\boldsymbol{\tau}\, v\,{\rm d}x=b(\boldsymbol{\tau}, v) \quad \forall~v\in\mathcal{P}_h.
\]

To present the commutative diagram,  we need some interpolation operators.
Let $Q_h$ be the $L^2$ orthogonal projection operator from $L^2(\Omega)$ onto $\mathcal{P}_h$ which can be extended to $H^{-1}(\Omega) \to \mathcal P_h$ as $\mathcal P_h \subset H_0^1(\Omega)$.

For any element $K\in \mathcal{T}_h$, define $I_K: H^2(K)\to P_r(K)$ in the following way (cf. \cite{BabuvskaOsbornPitkaranta1980,FalkOsborn1980, Comodi1989,Stenberg1991}) : given $w\in H^2(K)$,  any vertex $a$ of $K$, and any edge $e$ of $K$,
\begin{align*}
I_Kw(a)&=w(a),\\
\int_e(w-I_Kw)v\, {\rm d}s&=0 \quad \forall~v\in P_{r-2}(e),\\
\int_K(w-I_Kw)v\, \, {\rm d}x&=0 \quad \forall~v\in P_{r-3}(K).
\end{align*}
The associated global interpolation operator $I_h$ is given by
\[
(I_h)|_K:=I_K \quad\textrm{ for all } K\in\mathcal{T}_h.
\]
Let $\boldsymbol{I}_K=I_K\times I_K$, $\boldsymbol{I}_h=I_h\times I_h$.

\smallskip
\begin{lemma}\label{lem: Bcomforming}
 $(\mathrm{div}\boldsymbol{\mathrm{div}})_h$ is a conforming discretization of $B$ in the sense that $\ker ((\mathrm{div}\boldsymbol{\mathrm{div}})_h)\subset \ker B$.
\end{lemma}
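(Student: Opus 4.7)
The plan is to show that any $\boldsymbol{\tau}\in\mathcal{V}_h$ with $(\mathrm{div}\boldsymbol{\mathrm{div}})_h\boldsymbol{\tau}=0$ also kills the original bilinear form against every $v\in\mathcal P$, not only against discrete test functions in $\mathcal P_h$. By the definition of $(\mathrm{div}\boldsymbol{\mathrm{div}})_h$, the hypothesis gives $b(\boldsymbol{\tau},v_h)=0$ for all $v_h\in\mathcal P_h$. Since $I_h v\in\mathcal P_h$, the task reduces to proving the orthogonality
\[
b(\boldsymbol{\tau},v-I_h v)=0 \qquad \forall~\boldsymbol{\tau}\in\mathcal V_h,\ v\in\mathcal P,
\]
after which $b(\boldsymbol{\tau},v)=b(\boldsymbol{\tau},I_h v)=\int_\Omega (\mathrm{div}\boldsymbol{\mathrm{div}})_h\boldsymbol{\tau}\cdot I_h v\,{\rm d}x=0$. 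Thus the whole proof is a matter of matching the elementwise integration-by-parts residuals against the three degrees of freedom defining $I_h$: vertex values, edge moments of order up to $r-2$, and cell moments of order up to $r-3$.

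Write $w:=v-I_hv$, which is continuous, vanishes on $\partial\Omega$, is zero at every vertex (by the vertex DOF), and is $L^2$-orthogonal on each edge $e$ to $P_{r-2}(e)$ and on each element $K$ to $P_{r-3}(K)$. I would split $b(\boldsymbol{\tau},w)$ into its two pieces. For the volume term $-\int_\Omega(\boldsymbol\div_h\boldsymbol{\tau})\cdot\boldsymbol\nabla w\,{\rm d}x$, I integrate by parts on each $K$: the interior contribution $\int_K(\boldsymbol\nabla\cdot\boldsymbol\div\boldsymbol{\tau})w\,{\rm d}x$ vanishes because $\boldsymbol\nabla\cdot\boldsymbol\div\boldsymbol{\tau}|_K\in P_{r-3}(K)$ and $w\perp P_{r-3}(K)$; the boundary pieces reassemble over interior edges into $\int_e[(\boldsymbol\div\boldsymbol{\tau})\cdot\boldsymbol n_e]w\,{\rm d}s$, whose jump lies in $P_{r-2}(e)$, killed by the edge DOF, while on boundary edges $w$ itself vanishes. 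For the edge term $\sum_K\int_{\partial K}M_{nt}(\boldsymbol{\tau})\partial_t w\,{\rm d}s$, I use the orientation bookkeeping (noting that $M_{nt}(\boldsymbol\tau)$ is orientation-invariant but $\partial_t$ changes sign) to reassemble the sum over interior edges as $\int_e [M_{nt}(\boldsymbol{\tau})]\partial_t w\,{\rm d}s$; a one-dimensional integration by parts along $e$ produces endpoint contributions that vanish because $w$ is zero at vertices, and a remaining integral $\int_e\partial_t[M_{nt}(\boldsymbol\tau)]w\,{\rm d}s$ that vanishes because $\partial_t[M_{nt}(\boldsymbol\tau)]\in P_{r-2}(e)$ and again $w\perp P_{r-2}(e)$. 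On boundary edges, $w$ is zero.

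The only structural ingredient I must not forget is $[M_n(\boldsymbol{\tau})]|_e=0$, which I would use implicitly by choosing to work with the $(\boldsymbol\div_h,M_{nt})$ form of $b$ rather than the $(\boldsymbol\nabla^2,M_n)$ form; this is exactly the expression in which $M_n$ never appears, so no further jump assumption on $\boldsymbol\tau$ is needed beyond what defines $\mathcal V_h$. The main obstacle is bookkeeping rather than mathematical depth: tracking signs under the orientation conventions for $\boldsymbol n_e,\boldsymbol t_e$ when converting elementwise boundary sums into edge jump sums, and verifying in the low-order case $r=1$ (where the edge and cell DOF conditions are vacuous) that the vertex interpolation alone still makes every residual degenerate — which it does because $\boldsymbol\div_h\boldsymbol\tau\equiv 0$ and $M_{nt}(\boldsymbol\tau)$ is piecewise constant, so the $M_{nt}$ boundary integral reduces to $M_{nt}(\boldsymbol\tau)\,w\bigr|_{\partial e}=0$.
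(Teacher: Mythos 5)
Your proof is correct and takes essentially the same route as the paper: both reduce the claim to the identity $b(\boldsymbol{\tau}_h,v)=b(\boldsymbol{\tau}_h,I_hv)$ for all $\boldsymbol{\tau}_h\in\mathcal{V}_h$, $v\in\mathcal{P}$, and then conclude from the definition of $(\mathrm{div}\boldsymbol{\mathrm{div}})_h$. The only difference is that the paper cites this identity from Babu\v{s}ka--Osborn--Pitk\"aranta, whereas you supply its proof in full by elementwise integration by parts matched against the vertex, edge, and cell degrees of freedom of $I_h$ --- a correct filling-in of the cited step.
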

\begin{proof}
By the definition of $I_h$, we have (cf.~\cite[p.~1058]{BabuvskaOsbornPitkaranta1980})
\begin{equation}\label{eq:temp4}
 b(\boldsymbol \tau_h, v) =  b(\boldsymbol \tau_h, I_hv), \quad \forall \boldsymbol{\tau}_h\in \mathcal{V}_h, v\in \mathcal{P}.
\end{equation}

For any $\boldsymbol{\tau}\in \ker ((\mathrm{div}\boldsymbol{\mathrm{div}})_h)$, we get from \eqref{eq:temp4} that for any $v\in \mathcal{P}$,
\[
\langle B\boldsymbol{\tau}, v\rangle =b(\boldsymbol{\tau}, v)=b(\boldsymbol \tau, I_hv)=\int_{\Omega}(\mathrm{div}\boldsymbol{\mathrm{div}})_h\boldsymbol{\tau}\, I_hv\,{\rm d}x=0.
\]
Thus $\boldsymbol{\tau}\in \ker B$.
\end{proof}

Then define $\boldsymbol{\Pi}_K: \boldsymbol{H}^1(K, \mathbb{S})\to \boldsymbol{P}_{r-1}(K, \mathbb{S})$ in the following way (cf.~\cite{BabuvskaOsbornPitkaranta1980,FalkOsborn1980, Comodi1989,Brezzi.F;Fortin.M1991}): given $\boldsymbol{\tau}\in \boldsymbol{H}^1(K, \mathbb{S})$, for any element $K\in \mathcal{T}_h$ and any edge $e$ of $K$,
\begin{align*}
\int_eM_n\left((\boldsymbol{\tau}-\boldsymbol{\Pi}_K\boldsymbol{\tau})|_K\right)\mu\, {\rm d}s &=0 \quad \forall~\mu\in P_{r-1}(e),\\
\int_K(\boldsymbol{\tau}-\boldsymbol{\Pi}_K\boldsymbol{\tau}):\boldsymbol{\varsigma}\,{\rm d}x &=0 \quad \forall~\boldsymbol{\varsigma}\in \boldsymbol{P}_{r-2}(K, \mathbb{S}).
\end{align*}
The associated global interpolation operator $\boldsymbol{\Pi}_h: \mathcal{V}\to \mathcal{V}_h$ is given by
\[
(\boldsymbol{\Pi}_h)|_K:=\boldsymbol{\Pi}_K \quad\textrm{ for all } K\in\mathcal{T}_h.
\]
From the definition of $\boldsymbol{\Pi}_h$, it holds that
\begin{equation}\label{eq:bpi0}
b(\boldsymbol{\tau}-\boldsymbol{\Pi}_h\boldsymbol{\tau}, v)=0 \quad \forall~\boldsymbol{\tau}\in \mathcal{V}, v\in \mathcal{P}_h.
\end{equation}
Namely $Q_h B = (\mathrm{div}\boldsymbol{\mathrm{div}})_h\boldsymbol\Pi_h$.

\smallskip
\begin{lemma}\label{lem:hhjexactsequence}
The following sequence for the HHJ mixed method
\begin{equation}\label{exactsequence2}
\overline{\boldsymbol{P}}_1(\Omega; \mathbb{R}^2)\autorightarrow{$\subset$}{} \mathcal{S}_h\autorightarrow{$\nabla ^s\times$}{} \mathcal{V}_h \autorightarrow{$(\mathrm{div}\boldsymbol{\mathrm{div}})_h$}{} \mathcal{P}_h\autorightarrow{}{}0
\end{equation}
is an exact sequence.
\end{lemma}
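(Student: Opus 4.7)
The plan is to reduce each exactness claim in \eqref{exactsequence2} to the continuous complex \eqref{eq:hhjexactsequencecontinuous} via the interpolation operators displayed in the commutative diagram of the introduction. The right square of that diagram is \eqref{eq:bpi0}, so the new ingredient is the commutativity
\begin{equation}\label{eq:plancom}
\boldsymbol{\Pi}_h(\nabla^s\times \boldsymbol{\phi}) = \nabla^s\times(\boldsymbol{I}_h \boldsymbol{\phi}),
\end{equation}
valid whenever $\boldsymbol{\phi}$ is smooth enough for $\boldsymbol{I}_h\boldsymbol{\phi}$ to be defined. Before using \eqref{eq:plancom}, I would first verify that $\nabla^s\times$ maps $\mathcal{S}_h$ into $\mathcal{V}_h$: the polynomial degree drops from $r$ to $r-1$, and a short coordinate computation gives the edge identity $M_n(\nabla^s\times \boldsymbol{\phi}) = \partial_{\boldsymbol{t}}(\boldsymbol{\phi}\cdot \boldsymbol{n})$, so that the $\boldsymbol{H}^1$-continuity of $\boldsymbol{\phi}_h\cdot\boldsymbol{n}_e$ across an interior edge $e$ forces $[M_n(\nabla^s\times \boldsymbol{\phi}_h)]|_e = 0$.

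Granting this, the four exactness claims unfold as follows. Exactness at $\mathcal{S}_h$ is immediate from \eqref{eq:hhjexactsequencecontinuous}, since $\mathcal{S}_h\subset \boldsymbol{H}^1$. The complex property $(\mathrm{div}\boldsymbol{\mathrm{div}})_h\circ \nabla^s\times=0$ is shown by integrating $b(\nabla^s\times \boldsymbol{\phi}_h, v_h)$ by parts twice on each element and assembling the boundary pieces via the identity above, the $\boldsymbol{H}^1$-continuity of $\boldsymbol{\phi}_h$, the $H^1$-continuity of $v_h$, and $\partial_{\boldsymbol{t}}v_h|_{\partial\Omega}=0$. Surjectivity of $(\mathrm{div}\boldsymbol{\mathrm{div}})_h$ onto $\mathcal{P}_h$ is equivalent to the discrete inf-sup inequality recalled just after \eqref{mfem2}. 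For exactness at $\mathcal{V}_h$, take $\boldsymbol{\tau}_h\in \ker((\mathrm{div}\boldsymbol{\mathrm{div}})_h)$: Lemma~\ref{lem: Bcomforming} gives $B\boldsymbol{\tau}_h=0$, and testing against $H_0^2\subset \mathcal{P}$ yields $\mathrm{div}\boldsymbol{\mathrm{div}}\boldsymbol{\tau}_h=0$ distributionally. Then \eqref{eq:hhjexactsequencecontinuous} supplies $\boldsymbol{\phi}\in \boldsymbol{H}^1(\Omega;\mathbb{R}^2)$ with $\boldsymbol{\tau}_h=\nabla^s\times \boldsymbol{\phi}$; since $\boldsymbol{\tau}_h$ is piecewise polynomial, a Korn-type argument lets one choose $\boldsymbol{\phi}$ piecewise smooth enough to admit $\boldsymbol{I}_h\boldsymbol{\phi}$, and combining \eqref{eq:plancom} with $\boldsymbol{\Pi}_h\boldsymbol{\tau}_h=\boldsymbol{\tau}_h$ gives $\boldsymbol{\tau}_h = \nabla^s\times(\boldsymbol{I}_h\boldsymbol{\phi})\in \nabla^s\times \mathcal{S}_h$.

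The main obstacle is \eqref{eq:plancom}. Since $\boldsymbol{\Pi}_h$ is characterized by the edge moments $\int_e M_n(\cdot)\mu\,\mathrm{d}s$ ($\mu\in P_{r-1}(e)$) and the element moments $\int_K(\cdot):\boldsymbol{\varsigma}\,\mathrm{d}x$ ($\boldsymbol{\varsigma}\in \boldsymbol{P}_{r-2}(K,\mathbb{S})$), it suffices to match these for $\nabla^s\times\boldsymbol{\phi}$ and $\nabla^s\times \boldsymbol{I}_h \boldsymbol{\phi}$. For the edge moments, I would substitute $M_n(\nabla^s\times \boldsymbol{\psi})=\partial_{\boldsymbol{t}}(\boldsymbol{\psi}\cdot\boldsymbol{n})$ and integrate by parts along $e$: the vertex contributions cancel because $\boldsymbol{I}_h$ matches $\boldsymbol{\phi}$ at vertices, and the leftover $-\int_e(\boldsymbol{\phi}-\boldsymbol{I}_h\boldsymbol{\phi})\cdot\boldsymbol{n}\,\partial_{\boldsymbol{t}}\mu\,\mathrm{d}s$ vanishes because $\partial_{\boldsymbol{t}}\mu\in P_{r-2}(e)$ is annihilated by the edge moment conditions of $I_K$. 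For the element moments, an analogous integration by parts against $\boldsymbol{\varsigma}\in \boldsymbol{P}_{r-2}(K,\mathbb{S})$ shifts derivatives onto $\boldsymbol{\varsigma}$ and reduces the identity to boundary and interior moments of $\boldsymbol{\phi}-\boldsymbol{I}_h\boldsymbol{\phi}$ against polynomials of degree at most $r-2$ and $r-3$ respectively, all zero by construction of $I_K$.
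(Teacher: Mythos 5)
Your proposal is correct, but for the two substantive steps it takes a genuinely different route from the paper. For exactness at $\mathcal{V}_h$, the paper argues directly on the potential: given $\boldsymbol{\tau}\in\ker((\mathrm{div}\boldsymbol{\mathrm{div}})_h)$ it obtains $\boldsymbol{\phi}\in\boldsymbol{H}^1$ with $\boldsymbol{\tau}=\nabla^s\times\boldsymbol{\phi}$ exactly as you do, but then shows by a degree count that $\boldsymbol{\phi}$ is \emph{already} in $\mathcal{S}_h$: from $\mathbf{curl}(\mathrm{div}(\boldsymbol{\phi}|_K))=2\boldsymbol{\mathrm{div}}(\boldsymbol{\tau}|_K)\in\boldsymbol{P}_{r-2}$ it deduces $\mathrm{div}(\boldsymbol{\phi}|_K)\in P_{r-1}$, hence $\boldsymbol{\nabla}(\boldsymbol{\phi}|_K)\in\boldsymbol{P}_{r-1}$ and $\boldsymbol{\phi}|_K\in\boldsymbol{P}_r$. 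You instead route through the commutative identity $\boldsymbol{\Pi}_h\nabla^s\times=\nabla^s\times\boldsymbol{I}_h$ together with $\boldsymbol{\Pi}_h\boldsymbol{\tau}_h=\boldsymbol{\tau}_h$; this is valid (the commutativity is proved in the paper only in the subsequent theorem, but its proof does not rely on this lemma, so there is no circularity, and your moment-matching sketch reproduces that proof). The price is that you must justify elementwise $H^2$ regularity of $\boldsymbol{\phi}$ so that $\boldsymbol{I}_h\boldsymbol{\phi}$ is defined; what does the job is not Korn's inequality but the second-derivative identity $\partial_{jk}v_i=\partial_j\varepsilon_{ik}(\boldsymbol{v})+\partial_k\varepsilon_{ij}(\boldsymbol{v})-\partial_i\varepsilon_{jk}(\boldsymbol{v})$ applied to $\boldsymbol{v}=\boldsymbol{\phi}^{\perp}$, which shows $\boldsymbol{\phi}|_K$ is in fact a polynomial --- at which point you are one degree count away from the paper's shorter conclusion. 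What your detour buys is that the degree bound $\deg\leq r$ comes for free from $\boldsymbol{I}_h$. For surjectivity, the paper constructs an explicit preimage of $p\in\mathcal{P}_h$ (a discrete Poisson solve for $w_h$, then $\boldsymbol{\Pi}_h(w_h\mathcal{I})$), whereas you invoke the discrete inf-sup condition quoted from \cite{HuangHuangXu2011}; that implication is correct and legitimate as an appeal to a stated prior result, though the cited inf-sup proof is itself essentially the construction the paper repeats here.
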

\begin{proof}
As \eqref{exactsequence1}, \eqref{exactsequence2} is a complex by direct computation. Then we prove $\ker((\mathrm{div}\boldsymbol{\mathrm{div}})_h)=\nabla ^s\times\mathcal{S}_h$.
Take any $\boldsymbol{\tau}\in\ker((\mathrm{div}\boldsymbol{\mathrm{div}})_h)$. Since $\ker((\mathrm{div}\boldsymbol{\mathrm{div}})_h)\subset\ker B$ and thus using \eqref{eq:temp8} and the exact sequence \eqref{eq:hhjexactsequencecontinuous} in the continuous level, we find a vector function $\boldsymbol{\phi}\in\boldsymbol{H}^{1}(\Omega; \mathbb{R}^2)$ satisfying $\boldsymbol{\tau}=\nabla ^s\times\boldsymbol{\phi}$. By direct computation, it hold for each $K\in\mathcal{T}_h$
\[
\mathbf{curl}(\mathrm{div}(\boldsymbol{\phi}|_K))=2\mathbf{div}(\boldsymbol{\tau}|_K)\in \boldsymbol{P}_{r-2}(K, \mathbb{R}^2).
\]
Hence $\mathrm{div}(\boldsymbol{\phi}|_K)\in P_{r-1}(K)$, which combined with $\nabla ^s\times\boldsymbol{\phi}=\boldsymbol{\tau}\in  \boldsymbol{P}_{r-1}(K, \mathbb{S})$ means $\boldsymbol{\nabla}(\boldsymbol{\phi}|_K)\in  \boldsymbol{P}_{r-1}(K, \mathbb{S})$.
Therefore $\boldsymbol{\phi}|_K\in\boldsymbol{P}_{r}(K, \mathbb{R}^2)$, i.e. $\boldsymbol{\phi}\in\mathcal{S}_h$.

Using the similar argument as in Lemma~\ref{lem:1}, we have $\ker(\nabla ^s\times)=\overline{\boldsymbol{P}}_1(\Omega; \mathbb{R}^2)$.
To show that \eqref{exactsequence2} is exact, we shall prove $(\mathrm{div}\boldsymbol{\mathrm{div}})_h(\mathcal{V}_h)=\mathcal{P}_h$ by adapting a technique in~\cite[p.~1056]{BabuvskaOsbornPitkaranta1980}.

For any $p\in \mathcal{P}_h$, let $w_h\in \mathcal{P}_h$ be the solution of
\[
\int_{\Omega}\boldsymbol{\nabla}w_h\cdot \boldsymbol{\nabla}v\,{\rm d}x=-\int_{\Omega}pv\, {\rm d}x \quad \forall~v\in\mathcal{P}_h.
\]
Let $\boldsymbol{\sigma}_0=\left(
\begin{array}{cc}
w_h & 0 \\
0 & w_h
\end{array}
\right)$. Thanks to
$M_n(\boldsymbol{\sigma}_0)=\boldsymbol{n}^T\boldsymbol{\sigma}_0\boldsymbol{n}=w_h$ and $w_h\in \mathcal{P}_h$, $\boldsymbol{\sigma}_0\in\mathcal{V}$.
Let $\boldsymbol{\sigma}_I=\boldsymbol{\Pi}_h\boldsymbol{\sigma}_0\in\mathcal{V}_h$. Using \eqref{eq:bpi0}, integration by parts twice,  and the definitions of $\boldsymbol{\sigma}_0$ and $w_h$, it holds for any $v\in\mathcal{P}_h$
\begin{align*}
b(\boldsymbol{\sigma}_I, v)=&\, b(\boldsymbol{\sigma}_0, v)=\sum_{K\in\mathcal{T}_h}\int_{K}\boldsymbol{\sigma}_0:\nabla^2v \,{\rm d}x  - \sum_{K\in\mathcal{T}_h}\int_{\partial K}M_n(\boldsymbol{\sigma}_0)\partial_{\boldsymbol n}v \,{\rm d} s \\
=&\sum_{K\in\mathcal{T}_h}\int_{K}w_h\Delta v \,{\rm d} s - \sum_{K\in\mathcal{T}_h}\int_{\partial K}w_h\partial_{\boldsymbol n}v \,{\rm d} s \\
=& -\int_{\Omega}\boldsymbol{\nabla}w_h\cdot \boldsymbol{\nabla}v \,{\rm d}x=\int_{\Omega}pv \,{\rm d}x,
\end{align*}
from which we can see that $p=(\mathrm{div}\boldsymbol{\mathrm{div}})_h\boldsymbol{\sigma}_I$. The proof is finished.
\end{proof}

\smallskip
\begin{theorem}
We have the following commutative diagram for the HHJ mixed method
\[
\begin{array}{c}
\xymatrix{
  \overline{\boldsymbol{P}}_1(\Omega; \mathbb{R}^2) \ar[r]^{\subset} & \boldsymbol{H}^{1}(\Omega; \mathbb{R}^2) \ar[d]^{\boldsymbol{I}_h} \ar[r]^-{\nabla ^s\times}
                & \boldsymbol{H}^{-1}(\mathrm{div}\boldsymbol{\mathrm{div}},\Omega; \mathbb{S}) \ar[d]^{\boldsymbol{\Pi}_h}   \ar[r]^-{\mathrm{div}\boldsymbol{\mathrm{div}}} & \ar[d]^{Q_h}H^{-1}(\Omega) \ar[r]^{} & 0 \\
 \overline{\boldsymbol{P}}_1(\Omega; \mathbb{R}^2) \ar[r]^{\subset} & \mathcal{S}_h \ar[r]^{\nabla ^s\times}
                & \mathcal{V}_h   \ar[r]^{(\mathrm{div}\boldsymbol{\mathrm{div}})_h} &  \mathcal{P}_h \ar[r]^{}& 0    }
\end{array}.
\]
\end{theorem}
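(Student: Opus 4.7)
The plan is to verify the three commutative cells in the diagram; the exactness of the two rows is already in hand from Lemma~\ref{lem:1} (together with its $H^{-1}$-variant) and Lemma~\ref{lem:hhjexactsequence}. The three commutativity checks are: (i) the leftmost column $\overline{\boldsymbol{P}}_1 \hookrightarrow \boldsymbol{H}^1 \to \mathcal{S}_h$ reproduces the inclusion, (ii) $\boldsymbol{\Pi}_h \circ (\nabla^s\times) = (\nabla^s\times) \circ \boldsymbol{I}_h$, and (iii) $Q_h \circ \mathrm{div}\boldsymbol{\mathrm{div}} = (\mathrm{div}\boldsymbol{\mathrm{div}})_h \circ \boldsymbol{\Pi}_h$.

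For (i), since $r\ge 1$ the DOFs defining $I_K$ are unisolvent on $P_r(K)\supset P_1(K)$, so $I_K$ reproduces polynomials of degree $\le 1$, hence $\boldsymbol{I}_h$ is the identity on $\overline{\boldsymbol{P}}_1(\Omega;\mathbb{R}^2)$; this gives the leftmost cell. For (iii), the identity $Q_h B = (\mathrm{div}\boldsymbol{\mathrm{div}})_h \boldsymbol{\Pi}_h$ is an immediate consequence of \eqref{eq:bpi0} and the definition of $(\mathrm{div}\boldsymbol{\mathrm{div}})_h$, and is already stated right after \eqref{eq:bpi0}; combining this with \eqref{eq:temp8}, which identifies $B$ with $\mathrm{div}\boldsymbol{\mathrm{div}}$ in the $H^{-1}$ distributional sense, yields (iii).

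The real work is (ii). Since $\boldsymbol{I}_h\boldsymbol{\phi}\in\mathcal{S}_h$, the exactness of the bottom row places $\nabla^s\times\boldsymbol{I}_h\boldsymbol{\phi}$ in $\mathcal{V}_h$, and the defining conditions of $\boldsymbol{\Pi}_h$ make it a projection onto $\mathcal{V}_h$. Thus (ii) reduces to showing $\boldsymbol{\Pi}_h(\nabla^s\times\boldsymbol{\eta})=0$ for $\boldsymbol{\eta}:=\boldsymbol{\phi}-\boldsymbol{I}_h\boldsymbol{\phi}$, i.e., that the edge moments $\int_e M_n(\nabla^s\times\boldsymbol{\eta})\mu\, ds$ and the element moments $\int_K (\nabla^s\times\boldsymbol{\eta}):\boldsymbol{\varsigma}\,dx$ vanish for all $\mu\in P_{r-1}(e)$ and $\boldsymbol{\varsigma}\in\boldsymbol{P}_{r-2}(K;\mathbb{S})$. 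For the edge moment, a short direct calculation using the fact that $\boldsymbol{n}$ is constant on $e$ yields the key identity $M_n(\nabla^s\times\boldsymbol{\eta})=\partial_t(\boldsymbol{n}\cdot\boldsymbol{\eta})$; integrating by parts along $e$ transfers the tangential derivative onto $\mu$, where the boundary term at the endpoints vanishes by the vertex DOFs of $I_K$, and the remaining integral pairs $\boldsymbol{n}\cdot\boldsymbol{\eta}$ against $\partial_t\mu\in P_{r-2}(e)$ and vanishes by the edge DOFs of $I_K$. For the element moment, the symmetry of $\boldsymbol{\varsigma}$ collapses $(\nabla^s\times\boldsymbol{\eta}):\boldsymbol{\varsigma}$ to $(\mathbf{curl}\,\boldsymbol{\eta}):\boldsymbol{\varsigma}$; one integration by parts on $K$ produces an interior pairing of $\boldsymbol{\eta}$ with the row-wise rot of $\boldsymbol{\varsigma}$ (in $\boldsymbol{P}_{r-3}(K;\mathbb{R}^2)$) and a boundary pairing of $\boldsymbol{\eta}$ with $\boldsymbol{\varsigma}\boldsymbol{t}\in\boldsymbol{P}_{r-2}(e;\mathbb{R}^2)$ on $\partial K$, both of which are killed by the element and edge DOFs of $I_K$ respectively.

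The main obstacle is the clean degree-shift bookkeeping: one must see that the DOFs of $\boldsymbol{I}_h$ (vertex values, edge moments against $P_{r-2}$, element moments against $\boldsymbol{P}_{r-3}$) and those of $\boldsymbol{\Pi}_h$ (edge moments of $M_n$ against $P_{r-1}$, element moments against $\boldsymbol{P}_{r-2}$) are exactly compatible under $\nabla^s\times$. The identity $M_n(\nabla^s\times\boldsymbol{\eta})=\partial_t(\boldsymbol{n}\cdot\boldsymbol{\eta})$ and its element analogue delivered by the integration-by-parts formula above encode this compatibility, and locating and verifying these identities is the essential technical step of the proof.
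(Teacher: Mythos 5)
Your proposal is correct and follows essentially the same route as the paper: the rightmost cell is the identity $Q_hB=(\mathrm{div}\boldsymbol{\mathrm{div}})_h\boldsymbol{\Pi}_h$ already recorded in \eqref{eq:bpi0}, and the middle cell is verified by checking that the $\boldsymbol{\Pi}_h$ degrees of freedom of $\nabla^s\times(\boldsymbol{I}_h\boldsymbol{\phi}-\boldsymbol{\phi})$ vanish, using the key identity $M_n(\nabla^s\times\boldsymbol{\eta})=\partial_{\boldsymbol{t}}(\boldsymbol{\eta}\cdot\boldsymbol{n})$ on edges and integration by parts against the element moments, exactly as in \eqref{eq:temp1}--\eqref{eq:temp2}. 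Your degree bookkeeping for the edge and element moments is consistent with the paper's definitions of $\boldsymbol{I}_h$ and $\boldsymbol{\Pi}_h$.
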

\begin{proof}
The identity $Q_h \mathrm{div}\boldsymbol{\mathrm{div}} = (\mathrm{div}\boldsymbol{\mathrm{div}})_h\boldsymbol \Pi_h$ has been proved in \eqref{eq:bpi0}.

Next we show that for any $\boldsymbol{\phi}\in \boldsymbol{H}^{1}(\Omega; \mathbb{R}^2)\cap {\rm dom}(\boldsymbol I_h)$, $\nabla ^s\times(\boldsymbol{I}_h\boldsymbol{\phi})=\boldsymbol{\Pi}_h\nabla ^s\times\boldsymbol{\phi}$.
For each $\boldsymbol{\varsigma}\in \boldsymbol{P}_{r-2}(K, \mathbb{S})$ and $K\in\mathcal{T}_h$, it follows from integration by parts and the definitions of $\boldsymbol{\Pi}_h$ and $\boldsymbol{I}_h$
\begin{equation}\label{eq:temp1}
\int_K(\nabla ^s\times(\boldsymbol{I}_h\boldsymbol{\phi})-\boldsymbol{\Pi}_h(\nabla ^s\times\boldsymbol{\phi})):\boldsymbol{\varsigma}\, {\rm d}x=\int_K\nabla ^s\times(\boldsymbol{I}_h\boldsymbol{\phi}-\boldsymbol{\phi}):\boldsymbol{\varsigma}\, {\rm d}x=0.
\end{equation}
On each $e\in \mathcal{E}_h(K)$, by the definition of $\boldsymbol{\Pi}_h$, it holds for any $\mu\in P_{r-1}(e)$
\[
\int_eM_n(\nabla ^s\times(\boldsymbol{I}_h\boldsymbol{\phi})-\boldsymbol{\Pi}_h(\nabla ^s\times\boldsymbol{\phi}))\mu ~\textrm{d}s=\int_eM_n(\nabla ^s\times(\boldsymbol{I}_h\boldsymbol{\phi}-\boldsymbol{\phi}))\mu~\textrm{d}s.
\]
Note the fact that $M_n(\nabla ^s\times(\boldsymbol{I}_h\boldsymbol{\phi}-\boldsymbol{\phi}))=\partial_{\boldsymbol{t}}((\boldsymbol{I}_h\boldsymbol{\phi}-\boldsymbol{\phi})\cdot\boldsymbol{n})$.
Hence we get from integration by parts and the definition of $\boldsymbol{I}_h$
\begin{align}
\int_eM_n(\nabla ^s\times(\boldsymbol{I}_h\boldsymbol{\phi})-\boldsymbol{\Pi}_h(\nabla ^s\times\boldsymbol{\phi}))\mu ~\textrm{d}s=& \int_e\partial_{\boldsymbol{t}}((\boldsymbol{I}_h\boldsymbol{\phi}-\boldsymbol{\phi})\cdot\boldsymbol{n})\mu ~\textrm{d}s=0. \label{eq:temp2}
\end{align}
Since $(\nabla ^s\times(\boldsymbol{I}_h\boldsymbol{\phi})-\boldsymbol{\Pi}_h(\nabla ^s\times\boldsymbol{\phi}))|_K\in \boldsymbol{P}_{k-1}(K, \mathbb{S})$,  \eqref{eq:temp1}-\eqref{eq:temp2} together with the wellposedness of $\boldsymbol{\Pi}_h$ means $\nabla ^s\times(\boldsymbol{I}_h\boldsymbol{\phi})-\boldsymbol{\Pi}_h(\nabla ^s\times\boldsymbol{\phi})=\boldsymbol{0}$, i.e. $\nabla ^s\times(\boldsymbol{I}_h\boldsymbol{\phi})=\boldsymbol{\Pi}_h(\nabla ^s\times\boldsymbol{\phi})$.
\end{proof}

\smallskip
\begin{remark}\rm
It is worth mentioning that we use the natural Sobolev spaces with minimal regularity in the top sequence of the commutative diagram. The interpolation operators $\boldsymbol I_h$ and $\boldsymbol \Pi_h$, however, are defined for smoother functions and not bounded in the corresponding Sobolev norms. Namely we treat these interpolation operators as densely defined unbounded operators. It is possible to use the smoothing procedure  \cite{ArnoldFalkWinther2006} to define stable quasi-interpolation operators while preserving the commutative property. $\Box$
\end{remark}

\section{Stable Decomposition and Strengthened Cauchy Schwarz Inequality}\label{section:sdscs}
In this section, we will present a stable decomposition for the space $\mathcal V_h$ used in the HHJ mixed method. We assume that there exists  a sequence of meshes $\mathcal T_1, \mathcal T_2, \ldots, \mathcal T_J = \mathcal T_h$. Hereafter subscript $k$ is used to indicate spaces associated to triangulation $\mathcal{T}_k$. The triangulation $\mathcal T_1$ is a shape regular triangulation of $\Omega$ and $\mathcal T_{k+1}$ is obtained by dividing each triangle  in $\mathcal T_{k}$ into four congruent small triangles. The mesh size of $\mathcal T_k$ will be denoted by $h_k$. By the construction, the ratio $\gamma^2 = h_{k+1}/h_k = 1/2$.

Based on the exact sequence \eqref{exactsequence2}, define $\mathcal{K}_k:=\nabla ^s\times\mathcal{S}_k$ for $k=1,2,\cdots, J$. Obviously we have the following macro-decomposition
\[
\mathcal{K}_h=\mathcal{K}_1+\mathcal{K}_2+\cdots+\mathcal{K}_J.
\]

Denote by $N_k$ the number of vertices in $\mathcal{T}_k$ for $k=1,2, \cdots, J$.
Define the $i$-th patch $\omega_{k,i}$ in the $k$-th level as the union of the elements sharing the common $i$-th vertex in $\mathcal{T}_k$
for $i=1,2,\cdots, N_k$. Let
\[
\mathcal{S}_{k,i}:=\{\boldsymbol{\phi}\in\mathcal{S}_k: \textrm{supp}(\boldsymbol{\phi})\subset \omega_{k,i}\}, \; \mathcal{V}_{k,i}:=\{\boldsymbol{\tau}\in\mathcal{V}_k: \textrm{supp}(\boldsymbol{\tau})\subset \omega_{k,i}\},
\]
and $\mathcal{K}_{k,i}:=\nabla ^s\times\mathcal{S}_{k,i}$. It can be verified that
$$\mathcal{S}_h=\sum_{k=1}^J\mathcal{S}_k=\sum_{k=1}^J\sum_{i=1}^{N_k}\mathcal{S}_{k,i}, \quad \mathcal{V}_h=\sum_{k=1}^J\mathcal{V}_k=\sum_{k=1}^J\sum_{i=1}^{N_k}\mathcal{V}_{k,i},
$$
\begin{equation}\label{eq:Kdec}
\mathcal{K}_h=\sum_{k=1}^J\mathcal{K}_k=\sum_{k=1}^J\sum_{i=1}^{N_k}\mathcal{K}_{k,i}.
\end{equation}
We shall prove the space decomposition \eqref{eq:Kdec} is stable in the energy norm introduced by $\nabla ^s\times$.

\subsection{Equivalent norms}
We first introduce the following quotient spaces
\begin{align*}
&\widetilde{\mathcal{S}}:=\left\{\boldsymbol{\phi}\in \boldsymbol{H}^1(\Omega; \mathbb{R}^2): \int_{\Omega}\boldsymbol{\phi}\, {\rm d}x=\boldsymbol{0}, \quad \int_{\Omega}\boldsymbol{\phi}\cdot\boldsymbol{x}\, {\rm d}x=0 \right\}, \\
&\widetilde{\mathcal{S}}_k:=\left\{\boldsymbol{\phi}\in \mathcal{S}_k: \int_{\Omega}\boldsymbol{\phi}\, {\rm d}x=\boldsymbol{0}, \quad \int_{\Omega}\boldsymbol{\phi}\cdot\boldsymbol{x}\, {\rm d}x=0 \right\}.
\end{align*}
It is easy to see that
\[
\boldsymbol{H}^1(\Omega; \mathbb{R}^2)=\widetilde{\mathcal{S}}\oplus\overline{\boldsymbol{P}}_1(\Omega; \mathbb{R}^2), \quad \mathcal{S}_k=\widetilde{\mathcal{S}}_k\oplus\overline{\boldsymbol{P}}_1(\Omega; \mathbb{R}^2).
\]
Notation $\oplus$ means the direct sum. Since the polynomials of degree less than or equal to $1$ belong to the space $\mathcal S_k$, the spaces $\widetilde{\mathcal{S}}_k$ are nested.
Let
\[
\widetilde{\mathcal{W}}:=\{\boldsymbol{\phi}\in \boldsymbol{H}^1(\Omega; \mathbb{R}^2): \int_{\Omega}\boldsymbol{\phi}\cdot\boldsymbol{\psi}\,{\rm d}x=0\quad\forall\,\boldsymbol{\psi}\in\overline{\boldsymbol{P}}_1^{\textrm{Rot}}(\Omega; \mathbb{R}^2)\}, \quad
\widetilde{\mathcal{W}}_k:=\widetilde{\mathcal{W}}\cap\mathcal{S}_k.
\]
It is obvious that $\boldsymbol{\phi}^{\perp}\in\widetilde{\mathcal{S}}$ if $\boldsymbol{\phi}\in\widetilde{\mathcal{W}}$, and vice versa.

The following lemma says that in the quotient space $\widetilde{\mathcal{S}}$, the differential operator $\nabla ^s\times$ introduces a norm equivalent to $H^1$ norm. Similar result has been proved in~\cite{CarstensenGallistlHu2014} on a slightly different quotient space.

\begin{lemma}
It holds
\begin{equation}\label{eq:Jkorn1}
\|\boldsymbol{\phi}\|_1\lesssim \|\nabla ^s\times\boldsymbol{\phi}\|_0 \quad \forall~\boldsymbol{\phi}\in \widetilde{\mathcal{S}}.
\end{equation}
\end{lemma}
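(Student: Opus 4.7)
The plan is to reduce the claimed inequality to the classical (second) Korn inequality via the identification $\boldsymbol{\phi} \leftrightarrow \boldsymbol{\phi}^{\perp}$ already exploited in Lemma~\ref{lem:1}. Specifically, I would set $\boldsymbol{\psi} := \boldsymbol{\phi}^{\perp}$ and use the pointwise identity $\nabla^s\times\boldsymbol{\phi} = \boldsymbol{L}^T\boldsymbol{\varepsilon}(\boldsymbol{\psi})\boldsymbol{L}$. Since $\boldsymbol{L}$ is orthogonal, this gives $\|\nabla^s\times\boldsymbol{\phi}\|_0 = \|\boldsymbol{\varepsilon}(\boldsymbol{\psi})\|_0$, and since $\boldsymbol \phi \mapsto \boldsymbol{\phi}^{\perp}$ is an $H^1$ isometry, $\|\boldsymbol{\phi}\|_1 = \|\boldsymbol{\psi}\|_1$. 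So it suffices to establish $\|\boldsymbol{\psi}\|_1 \lesssim \|\boldsymbol{\varepsilon}(\boldsymbol{\psi})\|_0$.

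Next I would verify that $\boldsymbol{\psi} \in \widetilde{\mathcal{W}}$, i.e., $\boldsymbol{\psi}$ is $L^2$-orthogonal to the rigid body motion space $\overline{\boldsymbol{P}}_1^{\textrm{Rot}}(\Omega;\mathbb{R}^2) = \textrm{span}\{(1,0)^T, (0,1)^T, (-x_2,x_1)^T\}$. The constant directions follow from $\int_\Omega \boldsymbol{\phi}\,{\rm d}x = \boldsymbol 0$, since $\int_\Omega \psi_1\,{\rm d}x = -\int_\Omega \phi_2\,{\rm d}x = 0$ and similarly for $\psi_2$. For the rotational component, $\int_\Omega \boldsymbol{\psi}\cdot(-x_2,x_1)^T\,{\rm d}x = \int_\Omega(\phi_2 x_2 + \phi_1 x_1)\,{\rm d}x = \int_\Omega\boldsymbol{\phi}\cdot\boldsymbol{x}\,{\rm d}x = 0$. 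Hence the orthogonality conditions defining $\widetilde{\mathcal{S}}$ transfer exactly to those defining $\widetilde{\mathcal{W}}$.

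With $\boldsymbol{\psi} \in \boldsymbol H^1(\Omega;\mathbb{R}^2)$ orthogonal to all rigid body motions on a bounded Lipschitz domain, the classical second Korn inequality yields $\|\boldsymbol{\psi}\|_1 \lesssim \|\boldsymbol{\varepsilon}(\boldsymbol{\psi})\|_0$. Combining the three equalities/estimates gives the claim. The only potentially delicate point is making sure the orthogonality conditions in the definition of $\widetilde{\mathcal{S}}$ really kill all rigid body motions under the $\boldsymbol\phi \mapsto \boldsymbol\phi^\perp$ correspondence; the computation above confirms this, so no extra work is required. Once this reduction is in place, the result follows without further analysis.
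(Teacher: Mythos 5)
Your proposal is correct and follows essentially the same route as the paper: reduce to the second Korn inequality via the rotation $\boldsymbol{\phi}\mapsto\boldsymbol{\phi}^{\perp}$, using the identity $\nabla^s\times\boldsymbol{\phi}=\boldsymbol{L}^T\boldsymbol{\varepsilon}(\boldsymbol{\phi}^{\perp})\boldsymbol{L}$ and the fact that $\boldsymbol{\phi}\in\widetilde{\mathcal{S}}$ implies $\boldsymbol{\phi}^{\perp}\in\widetilde{\mathcal{W}}$. The only difference is that you spell out the orthogonality-to-rigid-body-motions check that the paper dismisses as obvious, which is a harmless (and correct) addition.
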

\begin{proof}
By a direct computation, we have for any vector $\boldsymbol{\phi}$ and $\boldsymbol{\psi}$
\begin{equation}\label{eq:temp3}
\nabla ^s\times\boldsymbol{\phi} : \nabla ^s\times\boldsymbol{\psi}=\boldsymbol{\varepsilon}(\boldsymbol{\phi}^{\perp}):\boldsymbol{\varepsilon}(\boldsymbol{\psi}^{\perp}).
\end{equation}
Since $\boldsymbol{\phi}\in \widetilde{\mathcal{S}}$, we have $\boldsymbol{\phi}^{\perp}\in\widetilde{\mathcal{W}}$.
According to the Korn's inequality (see (2.2) in~\cite{Brenner1994} and Theorem 2.3 in~\cite{Ciarlet2010}), it follows
\[
\|\boldsymbol{\phi}^{\perp}\|_1\lesssim \|\boldsymbol{\varepsilon}(\boldsymbol{\phi}^{\perp})\|_0.
\]
Then we obtain from \eqref{eq:temp3}
\[
\|\boldsymbol{\phi}\|_1=\|\boldsymbol{\phi}^{\perp}\|_1\lesssim \|\boldsymbol{\varepsilon}(\boldsymbol{\phi}^{\perp})\|_0=\|\nabla ^s\times\boldsymbol{\phi}\|_0,
\]
which ends the proof.
\end{proof}

\subsection{Strengthened Cauchy Schwarz Inequality}
Thanks to the relation \eqref{eq:temp3}, the following strengthened Cauchy Schwarz (SCS) inequality can be proved using the technique for the scalar case; see Xu~\cite{Xu1992}.
\begin{lemma}\label{lem:temp1}
Let $1\leq k\leq l\leq J$. We have
\[
\int_{\Omega}\nabla ^s\times \boldsymbol{\phi}: \nabla ^s\times \boldsymbol{\psi}\,{\rm d}x\lesssim \gamma^{l-k}h_l^{-1} \|\nabla ^s\times\boldsymbol{\phi}\|_0\|\boldsymbol{\psi}\|_0 \quad \forall~\boldsymbol{\phi}\in \mathcal{S}_k, \boldsymbol{\psi}\in \mathcal{S}_l.
\]
\end{lemma}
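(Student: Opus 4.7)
The plan is to reduce to the standard scalar strengthened Cauchy--Schwarz argument from Xu~\cite{Xu1992} via the identity~\eqref{eq:temp3}, which rewrites the left-hand side as $\int_\Omega \boldsymbol\varepsilon(\boldsymbol\phi^\perp):\boldsymbol\varepsilon(\boldsymbol\psi^\perp)\,\mathrm dx$ and also identifies $\|\nabla^s\times\boldsymbol\phi\|_0=\|\boldsymbol\varepsilon(\boldsymbol\phi^\perp)\|_0$ and $\|\boldsymbol\psi\|_0=\|\boldsymbol\psi^\perp\|_0$. The arithmetic target simplifies nicely: since $h_l/h_k=2^{-(l-k)}=\gamma^{2(l-k)}$, the desired bound $\gamma^{l-k}h_l^{-1}$ equals $h_k^{-1/2}h_l^{-1/2}$, and both $h_k^{-1}$ and $h_k^{-1/2}h_l^{-1/2}$ are $\lesssim h_k^{-1/2}h_l^{-1/2}$ because $h_l\le h_k$.

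The key step is an element-by-element integration by parts on the coarse mesh $\mathcal T_k$, which is legitimate because $\boldsymbol\phi|_T\in\boldsymbol P_r(T;\mathbb R^2)$ for every $T\in\mathcal T_k$ makes $\boldsymbol\varepsilon(\boldsymbol\phi^\perp)|_T$ a polynomial. This gives
\[
\int_\Omega \boldsymbol\varepsilon(\boldsymbol\phi^\perp):\boldsymbol\varepsilon(\boldsymbol\psi^\perp)\,\mathrm dx=\sum_{T\in\mathcal T_k}\Bigl[-\int_T \boldsymbol{\mathrm{div}}(\boldsymbol\varepsilon(\boldsymbol\phi^\perp))\cdot\boldsymbol\psi^\perp\,\mathrm dx+\int_{\partial T}(\boldsymbol\varepsilon(\boldsymbol\phi^\perp)\boldsymbol n_T)\cdot\boldsymbol\psi^\perp\,\mathrm ds\Bigr].
\]
I would then estimate the two families of terms by inverse inequalities on the appropriate scale: the coarse-mesh inverse estimate $\|\boldsymbol{\mathrm{div}}(\boldsymbol\varepsilon(\boldsymbol\phi^\perp))\|_{0,T}\lesssim h_k^{-1}\|\boldsymbol\varepsilon(\boldsymbol\phi^\perp)\|_{0,T}$ for the volume term and the coarse-mesh inverse trace $\|\boldsymbol\varepsilon(\boldsymbol\phi^\perp)\boldsymbol n_T\|_{0,\partial T}\lesssim h_k^{-1/2}\|\boldsymbol\varepsilon(\boldsymbol\phi^\perp)\|_{0,T}$ for the boundary term. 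For the factor $\boldsymbol\psi^\perp$, which lives on the finer mesh $\mathcal T_l$, I would use the fine-mesh inverse trace element by element along $\partial T$, giving $\sum_{T\in\mathcal T_k}\|\boldsymbol\psi^\perp\|_{0,\partial T}^2\lesssim h_l^{-1}\|\boldsymbol\psi^\perp\|_0^2$ after summing over the tube of fine elements adjacent to coarse edges (with finite overlap).

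Assembling these estimates with Cauchy--Schwarz over $T\in\mathcal T_k$, the volume contribution is bounded by $h_k^{-1}\|\nabla^s\times\boldsymbol\phi\|_0\|\boldsymbol\psi\|_0\le h_k^{-1/2}h_l^{-1/2}\|\nabla^s\times\boldsymbol\phi\|_0\|\boldsymbol\psi\|_0$, while the boundary contribution is bounded directly by $h_k^{-1/2}h_l^{-1/2}\|\nabla^s\times\boldsymbol\phi\|_0\|\boldsymbol\psi\|_0$. In view of $h_k^{-1/2}h_l^{-1/2}=\gamma^{l-k}h_l^{-1}$, this yields the claim.

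The main obstacle is the careful bookkeeping of the boundary integrals, specifically keeping the two inverse trace estimates on the correct scales ($h_k$ for $\boldsymbol\varepsilon(\boldsymbol\phi^\perp)$, $h_l$ for $\boldsymbol\psi^\perp$) and summing the fine-element contributions along each coarse edge with finite overlap. Everything else is a direct transcription of the scalar argument, made possible solely by the pointwise Frobenius identity~\eqref{eq:temp3}.
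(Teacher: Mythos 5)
Your proof is correct and is exactly the argument the paper intends: the paper omits the proof, invoking only the pointwise identity \eqref{eq:temp3} and the scalar strengthened Cauchy--Schwarz technique of Xu~\cite{Xu1992}, which is precisely the coarse-element integration by parts plus inverse/trace estimates that you carry out (and your arithmetic $\gamma^{l-k}h_l^{-1}=h_k^{-1/2}h_l^{-1/2}$ is right). No gaps.
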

\begin{proof}
For any $K\in\mathcal{T}_k$, we get from integration by parts and the Cauchy-Swarchz inequality
\begin{align*}
\int_{K}\nabla ^s\times\boldsymbol{\phi}: \nabla ^s\times\boldsymbol{\psi}\,{\rm d}x = & \int_{K}\nabla ^s\times\boldsymbol{\phi}: \mathbf{Curl}\boldsymbol{\psi}\,{\rm d}x \\
=& \int_{K}\mathbf{rot}(\nabla ^s\times\boldsymbol{\phi})\cdot \boldsymbol{\psi}\,{\rm d}x -\int_{\partial K}((\nabla ^s\times\boldsymbol{\phi})\boldsymbol{t})\cdot\boldsymbol{\psi} \,{\rm d} s \\
\lesssim & \|\mathbf{rot}(\nabla ^s\times\boldsymbol{\phi})\|_{0,K}\|\boldsymbol{\psi}\|_{0,K} + \|\nabla ^s\times\boldsymbol{\phi}\|_{0,\partial K}\|\boldsymbol{\psi}\|_{0,\partial K}.
\end{align*}
By the inverse inequality, it holds
\[
\|\boldsymbol{\psi}\|_{0,\partial K}^2\leq \sum_{\widetilde K\in\mathcal{T}_l, \widetilde K\subset K}\|\boldsymbol{\psi}\|_{0,\partial \widetilde K}^2\lesssim h_l^{-1}\sum_{\widetilde K\in\mathcal{T}_l, \widetilde K\subset K}\|\boldsymbol{\psi}\|_{0, \widetilde K}^2=h_l^{-1}\|\boldsymbol{\psi}\|_{0, K}^2.
\]
Then we get from the last two inequalities and the inverse inequality
\begin{align*}
\int_{K}\nabla ^s\times\boldsymbol{\phi}: \nabla ^s\times\boldsymbol{\psi}\,{\rm d}x
\lesssim & h_k^{-1}\|\nabla ^s\times\boldsymbol{\phi}\|_{0,K}\|\boldsymbol{\psi}\|_{0,K} + (h_kh_l)^{-1/2}\|\nabla ^s\times\boldsymbol{\phi}\|_{0,K}\|\boldsymbol{\psi}\|_{0,K} \\
\lesssim & (h_kh_l)^{-1/2}\|\nabla ^s\times\boldsymbol{\phi}\|_{0,K}\|\boldsymbol{\psi}\|_{0,K} \\
= & \gamma^{l-k}h_l^{-1}\|\nabla ^s\times\boldsymbol{\phi}\|_{0,K}\|\boldsymbol{\psi}\|_{0,K}.
\end{align*}
Due to the Cauchy-Swarchz inequality, we obtain
\begin{align*}
&\int_{\Omega}\nabla ^s\times\boldsymbol{\phi}: \nabla ^s\times\boldsymbol{\psi}\,{\rm d}x=\sum_{K\in\mathcal{T}_k}\int_{K}\nabla ^s\times\boldsymbol{\phi}: \nabla ^s\times\boldsymbol{\psi}\,{\rm d}x \\
\lesssim & \gamma^{l-k}h_l^{-1}\sum_{K\in\mathcal{T}_k}\|\nabla ^s\times\boldsymbol{\phi}\|_{0,K}\|\boldsymbol{\psi}\|_{0,K}
\lesssim \gamma^{l-k}h_l^{-1} \|\nabla ^s\times\boldsymbol{\phi}\|_0\|\boldsymbol{\psi}\|_0,
\end{align*}
as required.
\end{proof}

Next we prove the SCS inequality for the space decomposition \eqref{eq:Kdec} of $\mathcal K$. For this, we use the lexicographical order of the double index, i.e., $(l,j)>(k,i)$ if $l>k$ or $l=k$, $j>i$.
\begin{theorem}[SCS]
For any $\boldsymbol{\tau}_{k,i}\in\mathcal{K}_{k,i}$ and $\boldsymbol{\varsigma}_{l,j}\in\mathcal{K}_{l,j}$, we have
\[
\sum_{k=1}^J\sum_{i=1}^{N_k}\sum_{(l,j)>(k,i)}\int_{\Omega}\boldsymbol{\tau}_{k,i}:\boldsymbol{\varsigma}_{l,j}\,{\rm d}x \lesssim \left(\sum_{k=1}^J\sum_{i=1}^{N_k}\|\boldsymbol{\tau}_{k,i}\|_0^2\right)^{1/2}\left(\sum_{l=1}^J\sum_{j=1}^{N_l}\|\boldsymbol{\varsigma}_{l,j}\|_0^2\right)^{1/2}.
\]
\end{theorem}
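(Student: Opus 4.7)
The plan is to reduce the node-wise estimate to a level-wise one, apply Lemma~\ref{lem:temp1}, convert the $L^2$ norm of the pre-image back to the energy norm via a local Poincar\'e--Korn chain on each patch, and finish by summing a geometric series in the level indices.

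First I would pick, for each $(k,i)$, the unique pre-image $\boldsymbol{\phi}_{k,i}\in\mathcal{S}_{k,i}$ with $\boldsymbol{\tau}_{k,i}=\nabla^s\times\boldsymbol{\phi}_{k,i}$; uniqueness follows because the kernel of $\nabla^s\times$ consists of rigid motions and a nonzero rigid motion cannot have compact support in a single patch. Similarly write $\boldsymbol{\varsigma}_{l,j}=\nabla^s\times\boldsymbol{\psi}_{l,j}$. By continuity of these finite element functions together with the support constraint, $\boldsymbol{\phi}_{k,i}$ and $\boldsymbol{\psi}_{l,j}$ lie in $\boldsymbol{H}^1_0$ of their respective patches. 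For each pair of levels $k\leq l$ I aggregate over nodes, setting $\boldsymbol{\phi}_k:=\sum_i\boldsymbol{\phi}_{k,i}\in\mathcal{S}_k$ and $\boldsymbol{\psi}_l:=\sum_j\boldsymbol{\psi}_{l,j}\in\mathcal{S}_l$, and apply Lemma~\ref{lem:temp1} to obtain
\[
\sum_{i,j}\int_\Omega\boldsymbol{\tau}_{k,i}:\boldsymbol{\varsigma}_{l,j}\,{\rm d}x=\int_\Omega\nabla^s\times\boldsymbol{\phi}_k:\nabla^s\times\boldsymbol{\psi}_l\,{\rm d}x\lesssim\gamma^{l-k}h_l^{-1}\|\nabla^s\times\boldsymbol{\phi}_k\|_0\|\boldsymbol{\psi}_l\|_0.
\]

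Next I would convert these aggregated norms back to node-wise sums. Finite overlap of the patches at each fixed level, combined with the identity $\|\nabla^s\times\boldsymbol{\phi}\|_0^2=\|\boldsymbol{\varepsilon}(\boldsymbol{\phi}^\perp)\|_0^2$ from~\eqref{eq:temp3}, gives $\|\nabla^s\times\boldsymbol{\phi}_k\|_0^2\lesssim\sum_i\|\boldsymbol{\tau}_{k,i}\|_0^2$ and $\|\boldsymbol{\psi}_l\|_0^2\lesssim\sum_j\|\boldsymbol{\psi}_{l,j}\|_0^2$. For each individual $\boldsymbol{\psi}_{l,j}\in\boldsymbol{H}^1_0(\omega_{l,j};\mathbb{R}^2)$, Poincar\'e's inequality on a patch of diameter $O(h_l)$ yields $\|\boldsymbol{\psi}_{l,j}\|_0\lesssim h_l|\boldsymbol{\psi}_{l,j}|_1$, and Korn's inequality applied to $\boldsymbol{\psi}_{l,j}^\perp$ together with the same identity gives $|\boldsymbol{\psi}_{l,j}|_1\lesssim\|\boldsymbol{\varsigma}_{l,j}\|_0$, whence $\|\boldsymbol{\psi}_{l,j}\|_0\lesssim h_l\|\boldsymbol{\varsigma}_{l,j}\|_0$. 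The factor $h_l$ cancels the $h_l^{-1}$ in the previous display, and setting $X_k:=(\sum_i\|\boldsymbol{\tau}_{k,i}\|_0^2)^{1/2}$, $Y_l:=(\sum_j\|\boldsymbol{\varsigma}_{l,j}\|_0^2)^{1/2}$ leaves the clean level-wise estimate
\[
\sum_{i,j}\int_\Omega\boldsymbol{\tau}_{k,i}:\boldsymbol{\varsigma}_{l,j}\,{\rm d}x\lesssim\gamma^{l-k}X_kY_l\quad\text{for }k\leq l.
\]

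Finally, I would split the lexicographic range $(l,j)>(k,i)$ into the strict-level part $l>k$ and the same-level strict-index part $l=k$, $j>i$. The strict-level part is bounded by $\sum_{k<l}\gamma^{l-k}X_kY_l$, and since $\gamma=2^{-1/2}<1$, the standard discrete Young inequality for one-sided geometric convolutions yields $\lesssim(\sum_kX_k^2)^{1/2}(\sum_lY_l^2)^{1/2}$. For the same-level part, pairings supported on disjoint patches vanish, and finite overlap at the fixed level $k$ reduces the remaining pairings to $\lesssim X_kY_k$ by a direct Cauchy--Schwarz, whose sum over $k$ is again controlled by $(\sum_kX_k^2)^{1/2}(\sum_kY_k^2)^{1/2}$. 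The main obstacle will be the local Poincar\'e--Korn step: its uniformity in $h_l$ rests on the compact-support definition of $\mathcal{S}_{l,j}$, which places $\boldsymbol{\psi}_{l,j}^\perp$ in $\boldsymbol{H}^1_0$ of a single patch so that Korn's inequality applies with a constant independent of the level; without the $h_l$ gain this step provides, the factor $h_l^{-1}$ coming from Lemma~\ref{lem:temp1} would spoil the geometric summation.
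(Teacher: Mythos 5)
Your proof is correct and follows essentially the same route as the paper's: split the lexicographic sum into the strict-level part, handled by Lemma~\ref{lem:temp1} together with the patch-local equivalence $\|\boldsymbol{\psi}_{l,j}\|_0\eqsim h_l\|\nabla^s\times\boldsymbol{\psi}_{l,j}\|_0$ and a geometric-series/Schur argument, and the same-level part, handled by finite overlap and Cauchy--Schwarz. Your only deviation is to aggregate the nodal pre-images into level functions $\boldsymbol{\phi}_k,\boldsymbol{\psi}_l$ before invoking the lemma, which is just a slightly more explicit bookkeeping of the node-to-level summation that the paper leaves implicit.
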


\begin{proof}
Let $\boldsymbol{\tau}_{k,i}=\nabla ^s\times\boldsymbol{\phi}_{k,i}$ and $\boldsymbol{\varsigma}_{l,j}=\nabla ^s\times\boldsymbol{\psi}_{l,j}$ with $\boldsymbol{\phi}_{k,i}\in\mathcal{S}_{k,i}$ and $\boldsymbol{\psi}_{l,j}\in\mathcal{S}_{l,j}$.
Set
$
\boldsymbol{\phi}_{k}=\sum\limits_{i=1}^{N_k}\boldsymbol{\phi}_{k,i}$ and $\boldsymbol{\psi}_{l}=\sum\limits_{j=1}^{N_l}\boldsymbol{\psi}_{l,j}.
$
Using Lemma~\ref{lem:temp1} and the fact that $h_l^{-1}\|\boldsymbol{\psi}_{l,j}\|_0\eqsim \|\nabla ^s\times\boldsymbol{\psi}_{l,j}\|_0$, we get
\begin{align*}
&\sum_{k=1}^J\sum_{i=1}^{N_k}\sum_{l>k}\sum_{j=1}^{N_l}\int_{\Omega}\boldsymbol{\tau}_{k,i}:\boldsymbol{\varsigma}_{l,j}\,{\rm d}x \\
=& \sum_{k=1}^J\sum_{i=1}^{N_k}\sum_{l>k}\sum_{j=1}^{N_l}\int_{\Omega}\nabla ^s\times\boldsymbol{\phi}_{k,i} : \nabla ^s\times \boldsymbol{\psi}_{l,j}\,{\rm d}x
= \sum_{k=1}^J\sum_{l>k}\int_{\Omega}\nabla ^s\times\boldsymbol{\phi}_{k} : \nabla ^s\times \boldsymbol{\psi}_{l}\,{\rm d}x\\
\lesssim & \sum_{k=1}^J\sum_{l>k}\gamma^{l-k}h_l^{-1} \|\nabla ^s\times \boldsymbol{\phi}_{k}\|_0\|\boldsymbol{\psi}_{l}\|_0
\lesssim  \left(\sum_{k=1}^J\|\nabla ^s\times \boldsymbol{\phi}_{k}\|_0^2\right)^{1/2}\left(\sum_{l=1}^Jh_l^{-2}\|\boldsymbol{\psi}_{l}\|_0^2\right)^{1/2} \\
\lesssim & \left(\sum_{k=1}^J\sum_{i=1}^{N_k}\|\nabla ^s\times\boldsymbol{\phi}_{k,i}\|_0^2\right)^{1/2}\left(\sum_{l=1}^J\sum_{j=1}^{N_l}h_l^{-2}\|\boldsymbol{\psi}_{l,j}\|_0^2\right)^{1/2} \\
\lesssim & \left(\sum_{k=1}^J\sum_{i=1}^{N_k}\|\boldsymbol{\tau}_{k,i}\|_0^2\right)^{1/2}\left(\sum_{l=1}^J\sum_{j=1}^{N_l}\|\boldsymbol{\varsigma}_{l,j}\|_0^2\right)^{1/2}.
\end{align*}
On the other hand, since the index set $n_k(i):=\{j\in\{i+1,\cdots, N_k\}, \omega_{k,i}\cap\omega_{k,j}\neq\emptyset\}$ is finite in the $k$th level,
\begin{align*}
\sum_{k=1}^J\sum_{i=1}^{N_k}\sum_{j=i+1}^{N_k}\int_{\Omega}\boldsymbol{\tau}_{k,i}:\boldsymbol{\varsigma}_{k,j}\,{\rm d}x =& \sum_{k=1}^J\sum_{i=1}^{N_k}\sum_{j\in n_k(i)}\int_{\Omega}\boldsymbol{\tau}_{k,i}:\boldsymbol{\varsigma}_{k,j}\,{\rm d}x  \\
\lesssim & \left(\sum_{k=1}^J\sum_{i=1}^{N_k}\|\boldsymbol{\tau}_{k,i}\|_0^2\right)^{1/2}\left(\sum_{l=1}^J\sum_{j=1}^{N_l}\|\boldsymbol{\varsigma}_{l,j}\|_0^2\right)^{1/2}.
\end{align*}
The summation of the last two inequalities implies the desired result.
\end{proof}

\subsection{Stable Decomposition}
Let $\boldsymbol{Q}_k$ be the $L^2$ projection from $\boldsymbol{L}^2(\Omega; \mathbb{R}^2)$ onto $\mathcal{S}_k$. It is easy to see that $\boldsymbol{Q}_k\boldsymbol{\phi}\in\widetilde{\mathcal{S}}_k$ if $\boldsymbol{\phi}\in\widetilde{\mathcal{S}}$. Due to the nestedness of spaces $\mathcal S_k$, we also have $\boldsymbol{Q}_k\boldsymbol{Q}_l = \boldsymbol{Q}_k$ for $l\geq k$. The following first order error estimate of $\boldsymbol{Q}_k$ is well known
\begin{equation}
\| (I - \boldsymbol{Q}_k)\boldsymbol{\psi}\|_0\lesssim h_k\|\boldsymbol{\psi}\|_1,\quad \text{for all } \boldsymbol{\psi}\in \boldsymbol{H}^1(\Omega; \mathbb{R}^2).
\end{equation}

\smallskip

\begin{lemma}\label{lem:temp2}
Let
$
\boldsymbol{W}_i=(\boldsymbol{Q}_i-\boldsymbol{Q}_{i-1})\widetilde{\mathcal{S}}_h
$
for $i=1,2, \cdots, J$. We have
\[
\int_{\Omega}\nabla ^s\times\boldsymbol{\phi}: \nabla ^s\times\boldsymbol{\psi}\,{\rm d}x\lesssim \gamma^{|i-j|} \|\nabla ^s\times\boldsymbol{\phi}\|_0\|\nabla ^s\times\boldsymbol{\psi}\|_0
\]
for any $\boldsymbol{\phi}\in \boldsymbol{W}_i$ and $\boldsymbol{\psi}\in \boldsymbol{W}_j$.
\end{lemma}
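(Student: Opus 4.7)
The plan is to reduce this to Lemma~\ref{lem:temp1} combined with an $L^2$-to-energy bound on the ``finer'' factor that exploits the level-difference structure of $\boldsymbol W_j$. By the symmetry of the bilinear form on both sides, we may assume without loss of generality that $i \le j$ (the case $i=j$ is plain Cauchy--Schwarz with $\gamma^{0}=1$). Since $\boldsymbol W_i \subset \mathcal S_i$ and $\boldsymbol W_j \subset \mathcal S_j$, Lemma~\ref{lem:temp1} with $k=i$, $l=j$ gives
\[
\int_\Omega \nabla^s\times\boldsymbol\phi : \nabla^s\times\boldsymbol\psi\,\mathrm{d}x \lesssim \gamma^{j-i}\, h_j^{-1}\|\nabla^s\times\boldsymbol\phi\|_0\,\|\boldsymbol\psi\|_0.
\]
Thus the remaining task is to absorb the inverse power $h_j^{-1}$ using $\boldsymbol\psi\in\boldsymbol W_j$; concretely, to prove the ``super-approximation''
\[
\|\boldsymbol\psi\|_0 \lesssim h_j\,\|\nabla^s\times\boldsymbol\psi\|_0.
\]

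To establish this, I would write $\boldsymbol\psi=(\boldsymbol Q_j-\boldsymbol Q_{j-1})\boldsymbol\chi$ for some $\boldsymbol\chi\in\widetilde{\mathcal S}_h$, and exploit two facts from Section~3.1. First, the nestedness $\mathcal S_{j-1}\subset \mathcal S_j$ yields the projection identity $\boldsymbol Q_{j-1}\boldsymbol Q_j=\boldsymbol Q_{j-1}$, hence $\boldsymbol Q_{j-1}\boldsymbol\psi=0$ and therefore $\boldsymbol\psi=(I-\boldsymbol Q_{j-1})\boldsymbol\psi$. Second, because $\boldsymbol Q_k$ maps $\widetilde{\mathcal S}$ into $\widetilde{\mathcal S}_k\subset\widetilde{\mathcal S}$, one has $\boldsymbol\psi\in\widetilde{\mathcal S}_j\subset\widetilde{\mathcal S}$, so the Korn-type inequality \eqref{eq:Jkorn1} is applicable. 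Combining the first-order $L^2$ approximation estimate of $\boldsymbol Q_{j-1}$ with \eqref{eq:Jkorn1} and the mesh relation $h_{j-1}\eqsim h_j$ then gives
\[
\|\boldsymbol\psi\|_0=\|(I-\boldsymbol Q_{j-1})\boldsymbol\psi\|_0 \lesssim h_{j-1}\,\|\boldsymbol\psi\|_1 \lesssim h_{j-1}\,\|\nabla^s\times\boldsymbol\psi\|_0 \lesssim h_j\,\|\nabla^s\times\boldsymbol\psi\|_0.
\]
Multiplying the two displayed estimates closes the argument, with $|i-j|=j-i$.

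The only delicate step is the justification that $\boldsymbol\psi\in\widetilde{\mathcal S}$, since \eqref{eq:Jkorn1} fails on the quotient components coming from $\overline{\boldsymbol P}_1(\Omega;\mathbb R^2)$. This follows either from the general fact that $\boldsymbol Q_k$ preserves $\widetilde{\mathcal S}$, or more directly by noting $\overline{\boldsymbol P}_1(\Omega;\mathbb R^2)\subset\mathcal S_{j-1}$, so the identity $\boldsymbol Q_{j-1}\boldsymbol\psi=0$ already forces $\boldsymbol\psi\perp_{L^2}\overline{\boldsymbol P}_1(\Omega;\mathbb R^2)$, which is exactly the defining moment condition of $\widetilde{\mathcal S}$. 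Beyond that, the argument is a routine application of the Xu~\cite{Xu1992}-style technique adapted through the identity \eqref{eq:temp3} relating $\nabla^s\times$ to $\boldsymbol\varepsilon$.
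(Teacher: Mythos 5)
Your proposal is correct and follows essentially the same route as the paper's proof: apply Lemma~\ref{lem:temp1} and absorb the inverse factor $h_j^{-1}$ via the super-approximation bound $\|\boldsymbol\psi\|_0=\|(I-\boldsymbol Q_{j-1})\boldsymbol\psi\|_0\lesssim h_j\|\boldsymbol\psi\|_1\lesssim h_j\|\nabla^s\times\boldsymbol\psi\|_0$ on $\boldsymbol W_j$. Your extra care in checking that $\boldsymbol\psi\in\widetilde{\mathcal S}$ (so that \eqref{eq:Jkorn1} applies) and in handling the $i\le j$ reduction is a welcome elaboration of details the paper leaves implicit.
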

\begin{proof}
According to the estimate of $\boldsymbol{Q}_{j-1}$ and \eqref{eq:Jkorn1},
\[
\|\boldsymbol{\psi}\|_0=\|(\boldsymbol{I}-\boldsymbol{Q}_{j-1})\boldsymbol{\psi}\|_0\lesssim h_j\|\boldsymbol{\psi}\|_1\lesssim h_j\|\nabla ^s\times\boldsymbol{\psi}\|_0 \quad \forall~\boldsymbol{\psi}\in \boldsymbol{W}_j.
\]
The proof is finished from Lemma~\ref{lem:temp1}.
\end{proof}

Let $\boldsymbol{P}_k$ be the $\nabla ^s\times$-orthogonal projection onto $\widetilde{\mathcal{S}}_k$, that is for any $\boldsymbol{\phi}\in\widetilde{\mathcal{S}}$,
\begin{equation}\label{eq:operatorP}
\int_{\Omega}\nabla ^s\times(\boldsymbol{P}_k\boldsymbol{\phi}): \nabla ^s\times\boldsymbol{\chi}\,{\rm d}x=\int_{\Omega}\nabla ^s\times\boldsymbol{\phi}: \nabla ^s\times\boldsymbol{\chi}\,{\rm d}x \quad \forall~\boldsymbol{\chi}\in\widetilde{\mathcal{S}}_k.
\end{equation}
To derive the error estimate of $\boldsymbol{P}_k$, we introduce another operator $\boldsymbol{R}_k$ which is related to the pure traction problem in the planar linear elasticity.
Let $\boldsymbol{R}_k: \widetilde{\mathcal{W}}\to\widetilde{\mathcal{W}}_k$ be defined as follows: for any $\boldsymbol{\phi}\in\widetilde{\mathcal{W}}$, $\boldsymbol{R}_k\boldsymbol{\phi}$ is uniquely determined by
\[
\int_{\Omega}\boldsymbol{\varepsilon}(\boldsymbol{R}_k\boldsymbol{\phi}): \boldsymbol{\varepsilon}(\boldsymbol{\chi})\, {\rm d}x=\int_{\Omega}\boldsymbol{\varepsilon}(\boldsymbol{\phi}): \boldsymbol{\varepsilon}(\boldsymbol{\chi})\,{\rm d}x \quad \forall~\boldsymbol{\chi}\in\widetilde{\mathcal{W}}_k.
\]
According to the standard finite element approximation theory (cf.~\cite[(5.9)]{BramblePasciak1987}), we have
\begin{equation}\label{eq:estimateFEMelas}
\|\boldsymbol{\phi}-\boldsymbol{R}_k\boldsymbol{\phi}\|_{1-\alpha}\lesssim h_k^{\alpha}\|\boldsymbol{\phi}\|_1\quad \forall~\boldsymbol{\phi}\in\widetilde{\mathcal{W}}
\end{equation}
for some constant $\alpha\in(0,1]$. Here $\alpha$ is the parameter indicating the elliptic regularity of the pure traction problem in the planar linear elasticity defined in $\Omega$ (cf. \cite{Grisvard1992}). $ \alpha= 1$ if $\Omega$ is convex and
$0<\alpha < 1$ if $\Omega$ is nonconvex.

\begin{lemma}
It holds
\begin{equation}\label{eq:estimateFEM}
\|\boldsymbol{\phi}-\boldsymbol{P}_k\boldsymbol{\phi}\|_{1-\alpha}\lesssim h_k^{\alpha}\|\boldsymbol{\phi}\|_1\quad \forall~\boldsymbol{\phi}\in\widetilde{\mathcal{S}}.
\end{equation}
\end{lemma}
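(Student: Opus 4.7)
The plan is to reduce the estimate for the $\nabla^s\times$-projection $\boldsymbol{P}_k$ onto the finite element elasticity projection $\boldsymbol{R}_k$, for which the error estimate \eqref{eq:estimateFEMelas} is already available, and to transfer the bound via the isometry $\boldsymbol{\phi}\mapsto \boldsymbol{\phi}^{\perp}$.

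First I would observe that the $90^\circ$ rotation $\boldsymbol{\phi}\mapsto \boldsymbol{\phi}^{\perp}$ is a pointwise linear isometry on $\boldsymbol{L}^2(\Omega;\mathbb{R}^2)$ and $\boldsymbol{H}^s(\Omega;\mathbb{R}^2)$ for any $s$, and by construction it maps $\widetilde{\mathcal{S}}$ bijectively onto $\widetilde{\mathcal{W}}$, and $\widetilde{\mathcal{S}}_k$ bijectively onto $\widetilde{\mathcal{W}}_k$. Combining this with identity \eqref{eq:temp3}, the defining relation \eqref{eq:operatorP} of $\boldsymbol{P}_k$ can be rewritten, with $\boldsymbol{\chi}\in\widetilde{\mathcal{S}}_k$ ranging over all of its test elements (equivalently $\boldsymbol{\chi}^{\perp}$ over $\widetilde{\mathcal{W}}_k$), as
\[
\int_{\Omega}\boldsymbol{\varepsilon}\bigl((\boldsymbol{P}_k\boldsymbol{\phi})^{\perp}\bigr):\boldsymbol{\varepsilon}(\boldsymbol{\chi}^{\perp})\,{\rm d}x=\int_{\Omega}\boldsymbol{\varepsilon}(\boldsymbol{\phi}^{\perp}):\boldsymbol{\varepsilon}(\boldsymbol{\chi}^{\perp})\,{\rm d}x.
\]
By the uniqueness of $\boldsymbol{R}_k$, this forces the commutation identity
\[
(\boldsymbol{P}_k\boldsymbol{\phi})^{\perp} = \boldsymbol{R}_k(\boldsymbol{\phi}^{\perp}) \quad \forall~\boldsymbol{\phi}\in\widetilde{\mathcal{S}}.
\]

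With this identity in hand, the rest is just invoking \eqref{eq:estimateFEMelas}. Because $\boldsymbol{\phi}\mapsto\boldsymbol{\phi}^{\perp}$ preserves the $\boldsymbol{H}^{s}$-norm, we compute
\[
\|\boldsymbol{\phi}-\boldsymbol{P}_k\boldsymbol{\phi}\|_{1-\alpha}=\|(\boldsymbol{\phi}-\boldsymbol{P}_k\boldsymbol{\phi})^{\perp}\|_{1-\alpha}=\|\boldsymbol{\phi}^{\perp}-\boldsymbol{R}_k\boldsymbol{\phi}^{\perp}\|_{1-\alpha}\lesssim h_k^{\alpha}\|\boldsymbol{\phi}^{\perp}\|_1 = h_k^{\alpha}\|\boldsymbol{\phi}\|_1,
\]
which is the desired inequality.

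The argument is essentially a dictionary translation, so there is no serious obstacle; the only point that deserves care is confirming that $\boldsymbol{\phi}\in\widetilde{\mathcal{S}}$ really does correspond to $\boldsymbol{\phi}^{\perp}\in\widetilde{\mathcal{W}}$ (and similarly at the discrete level), so that the right-hand sides $\boldsymbol{\varepsilon}(\boldsymbol{\phi}^{\perp})$ and the test space $\widetilde{\mathcal{W}}_k$ in the definition of $\boldsymbol{R}_k$ are legitimate. This is immediate from the definitions of $\widetilde{\mathcal{S}}$, $\widetilde{\mathcal{W}}$ and $\overline{\boldsymbol{P}}_1^{\mathrm{Rot}}$, since the $L^2$-orthogonality conditions defining $\widetilde{\mathcal{S}}$ are exactly the $L^2$-orthogonality against the rotated basis $\{(0,1)^T,(1,0)^T,(x_1,x_2)^T\}^{\perp}$ spanning $\overline{\boldsymbol{P}}_1^{\mathrm{Rot}}$ that defines $\widetilde{\mathcal{W}}$.
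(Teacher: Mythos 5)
Your proof is correct and follows essentially the same route as the paper's: rewrite the defining relation of $\boldsymbol{P}_k$ via the identity $\nabla^s\times\boldsymbol{\phi}:\nabla^s\times\boldsymbol{\psi}=\boldsymbol{\varepsilon}(\boldsymbol{\phi}^{\perp}):\boldsymbol{\varepsilon}(\boldsymbol{\psi}^{\perp})$, deduce $(\boldsymbol{P}_k\boldsymbol{\phi})^{\perp}=\boldsymbol{R}_k(\boldsymbol{\phi}^{\perp})$, and transfer the elasticity estimate \eqref{eq:estimateFEMelas} through the norm-preserving rotation. Your extra check that the rotation maps $\widetilde{\mathcal{S}}$, $\widetilde{\mathcal{S}}_k$ onto $\widetilde{\mathcal{W}}$, $\widetilde{\mathcal{W}}_k$ is a welcome bit of care that the paper leaves implicit.
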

\begin{proof}
Due to \eqref{eq:temp3}, \eqref{eq:operatorP} is equivalent to
\[
\int_{\Omega}\boldsymbol{\varepsilon}((\boldsymbol{P}_k\boldsymbol{\phi})^{\perp}): \boldsymbol{\varepsilon}(\boldsymbol{\chi}^{\perp})\, {\rm d}x=\int_{\Omega}\boldsymbol{\varepsilon}(\boldsymbol{\phi}^{\perp}): \boldsymbol{\varepsilon}(\boldsymbol{\chi}^{\perp})\,{\rm d}x \quad \forall~\boldsymbol{\chi}\in\widetilde{\mathcal{S}}_k,
\]
which is nothing but
\[
\int_{\Omega}\boldsymbol{\varepsilon}((\boldsymbol{P}_k\boldsymbol{\phi})^{\perp}): \boldsymbol{\varepsilon}(\boldsymbol{\chi})\, {\rm d}x=\int_{\Omega}\boldsymbol{\varepsilon}(\boldsymbol{\phi}^{\perp}): \boldsymbol{\varepsilon}(\boldsymbol{\chi})\,{\rm d}x \quad \forall~\boldsymbol{\chi}\in\widetilde{\mathcal{W}}_k.
\]
Noting the fact that $\boldsymbol{\phi}^{\perp}\in\widetilde{\mathcal{W}}$ and $(\boldsymbol{P}_k\boldsymbol{\phi})^{\perp}\in\widetilde{\mathcal{W}}_k$,
we get $(\boldsymbol{P}_k\boldsymbol{\phi})^{\perp}=\boldsymbol{R}_k(\boldsymbol{\phi}^{\perp})$. Therefore it follows from \eqref{eq:estimateFEMelas}
\begin{align*}
\|\boldsymbol{\phi}-\boldsymbol{P}_k\boldsymbol{\phi}\|_{1-\alpha}=&\|\boldsymbol{\phi}^{\perp}-(\boldsymbol{P}_k\boldsymbol{\phi})^{\perp}\|_{1-\alpha}
=\|\boldsymbol{\phi}^{\perp}-\boldsymbol{R}_k(\boldsymbol{\phi}^{\perp})\|_{1-\alpha} \\
\lesssim & h_k^{\alpha}\|\boldsymbol{\phi}^{\perp}\|_1=h_k^{\alpha}\|\boldsymbol{\phi}\|_1,
\end{align*}
as required.
\end{proof}

Again using the technique for the scalar $H^1$ space~\cite{Xu1992}, we have the following stable decomposition of functions in $\tilde S_h$.
\begin{lemma}[Stable macro-decomposition]\label{lem:stablemacrodscomposition}
For each $\boldsymbol{\phi}\in \widetilde{\mathcal{S}}_h$, there exist $\boldsymbol{\phi}_k\in \widetilde{\mathcal{S}}_k$, $k=1,2,\cdots, J$ such that
\[
\boldsymbol{\phi}=\sum_{k=1}^J\boldsymbol{\phi}_k, \;\textrm{ and }\; \sum_{k=1}^J\|\nabla ^s\times\boldsymbol{\phi}_k\|_0^2\eqsim \|\nabla ^s\times\boldsymbol{\phi}\|_0^2.
\]
\end{lemma}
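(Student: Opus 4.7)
The plan is to use the canonical orthogonal decomposition induced by the nested energy projections $\boldsymbol{P}_k$ defined in \eqref{eq:operatorP}. First I would observe that the quotient spaces are nested: $\widetilde{\mathcal{S}}_{k-1}\subset\widetilde{\mathcal{S}}_k$. This is because $\mathcal{S}_{k-1}\subset\mathcal{S}_k$ by the refinement of meshes, and the two linear side-constraints $\int_\Omega \boldsymbol{\phi}\,{\rm d}x=\boldsymbol{0}$ and $\int_\Omega \boldsymbol{\phi}\cdot\boldsymbol{x}\,{\rm d}x=0$ are clearly preserved. In particular $\widetilde{\mathcal{S}}_h=\widetilde{\mathcal{S}}_J$, so $\boldsymbol{P}_J\boldsymbol{\phi}=\boldsymbol{\phi}$ for every $\boldsymbol{\phi}\in\widetilde{\mathcal{S}}_h$.

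Next I would introduce the telescoping decomposition $\boldsymbol{\phi}_k:=(\boldsymbol{P}_k-\boldsymbol{P}_{k-1})\boldsymbol{\phi}$ for $k=1,\dots,J$, with the convention $\boldsymbol{P}_0=0$. Since $\widetilde{\mathcal{S}}_{k-1}\subset\widetilde{\mathcal{S}}_k$, both $\boldsymbol{P}_k\boldsymbol{\phi}$ and $\boldsymbol{P}_{k-1}\boldsymbol{\phi}$ sit inside $\widetilde{\mathcal{S}}_k$, so $\boldsymbol{\phi}_k\in\widetilde{\mathcal{S}}_k$ as required. The sum of the pieces is the telescoping identity $\sum_{k=1}^J\boldsymbol{\phi}_k=\boldsymbol{P}_J\boldsymbol{\phi}=\boldsymbol{\phi}$.

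For the stability, I would invoke the Pythagorean identity for nested energy projections. Nestedness gives $\boldsymbol{P}_{k-1}\boldsymbol{P}_k=\boldsymbol{P}_{k-1}$, hence for every $k\ge 1$ the correction $\nabla^s\times\boldsymbol{\phi}_k=\nabla^s\times(\boldsymbol{P}_k-\boldsymbol{P}_{k-1})\boldsymbol{\phi}$ is $L^2$-orthogonal to $\nabla^s\times\boldsymbol{P}_{k-1}\boldsymbol{\phi}$. An induction on $k$ therefore yields
\[
\|\nabla^s\times \boldsymbol{P}_k\boldsymbol{\phi}\|_0^2=\|\nabla^s\times \boldsymbol{P}_{k-1}\boldsymbol{\phi}\|_0^2+\|\nabla^s\times\boldsymbol{\phi}_k\|_0^2,
\]
and summing to $k=J$ gives exactly $\|\nabla^s\times\boldsymbol{\phi}\|_0^2=\sum_{k=1}^J\|\nabla^s\times\boldsymbol{\phi}_k\|_0^2$, i.e.\ the asserted equivalence with constant $1$.

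There is essentially no obstacle here; the entire content is the nestedness of the $\widetilde{\mathcal{S}}_k$ plus the orthogonality of $\nabla^s\times$-projections onto nested subspaces. The strengthened Cauchy--Schwarz inequality (Lemma~\ref{lem:temp2}) and the $\boldsymbol{P}_k$ error estimate~\eqref{eq:estimateFEM} are in fact \emph{not} needed for this macro-decomposition as stated. The alternative and essentially equivalent route, closer to the spirit of the preceding lemmas, is to set $\boldsymbol{\phi}_k=(\boldsymbol{Q}_k-\boldsymbol{Q}_{k-1})\boldsymbol{\phi}\in\boldsymbol{W}_k$; then the lower bound $\|\nabla^s\times\boldsymbol{\phi}\|_0^2\lesssim\sum_k\|\nabla^s\times\boldsymbol{\phi}_k\|_0^2$ comes from Lemma~\ref{lem:temp2} together with Schur summation of the geometrically decaying matrix $(\gamma^{|i-j|})$, while the upper bound requires one to write $\boldsymbol{\phi}_k=(\boldsymbol{Q}_k-\boldsymbol{Q}_{k-1})(\boldsymbol{\phi}-\boldsymbol{P}_{k-1}\boldsymbol{\phi})$ (exploiting $\boldsymbol{Q}_j\boldsymbol{P}_{k-1}=\boldsymbol{P}_{k-1}$ for $j\ge k-1$) and to combine the inverse inequality $\|\nabla^s\times\boldsymbol{\phi}_k\|_0\lesssim h_k^{-1}\|\boldsymbol{\phi}_k\|_0$ with the regularity estimate~\eqref{eq:estimateFEM}. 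In either formulation the real difficulty is not here but in the subsequent refinement of each $\boldsymbol{\phi}_k$ into the local pieces $\mathcal{K}_{k,i}$ used by the multigrid smoother, which is precisely where a partition-of-unity controlled through Lemma~\ref{lem:temp2} and~\eqref{eq:estimateFEM} becomes unavoidable.
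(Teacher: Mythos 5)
Your primary argument---take $\boldsymbol{\phi}_k=(\boldsymbol{P}_k-\boldsymbol{P}_{k-1})\boldsymbol{\phi}$ with the nested $\nabla ^s\times$-orthogonal projections of \eqref{eq:operatorP} and apply the telescoping/Pythagoras identity---is correct and proves the lemma exactly as stated, with equivalence constant $1$. It is genuinely different from, and much more elementary than, the paper's proof. The paper instead establishes the statement for the specific choice $\boldsymbol{\phi}_k=\tilde{\boldsymbol{Q}}_k\boldsymbol{\phi}=(\boldsymbol{Q}_k-\boldsymbol{Q}_{k-1})\boldsymbol{\phi}$ built from $L^2$ projections, and this choice is not cosmetic: Lemma~\ref{lem:stablemicrodscomposition} and the final stable-decomposition theorem start from precisely this $\boldsymbol{\phi}_k$ and exploit $\boldsymbol{\phi}_k=(\boldsymbol{I}-\boldsymbol{Q}_{k-1})\boldsymbol{\phi}_k$ to obtain the local smallness $\|\boldsymbol{\phi}_k\|_0\lesssim h_k\|\nabla ^s\times\boldsymbol{\phi}_k\|_0$, which drives the micro-decomposition. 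The energy-orthogonal pieces $(\boldsymbol{P}_k-\boldsymbol{P}_{k-1})\boldsymbol{\phi}$ enjoy no such uniform local $L^2$ smallness (\eqref{eq:estimateFEM} only gives $h_{k-1}^{\alpha}\|\boldsymbol{\phi}\|_1$ with the \emph{global} norm on the right), so your decomposition, while a valid proof of this lemma in isolation, cannot be passed to the next step. You correctly flag this; what your proposal buys is a clean observation that the macro-level statement alone is trivial, at the cost of not producing the object the paper actually needs.

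One caution on your sketch of the $\boldsymbol{Q}_k$ route. Writing $\boldsymbol{\phi}_k=\tilde{\boldsymbol{Q}}_k(\boldsymbol{I}-\boldsymbol{P}_{k-1})\boldsymbol{\phi}$ and combining the inverse inequality with \eqref{eq:estimateFEM} in a single level gives at best $\|\nabla ^s\times\boldsymbol{\phi}_k\|_0\lesssim h_k^{-1}h_{k-1}^{\alpha}\|\boldsymbol{\phi}\|_1$, which diverges for $\alpha<1$ and, even for $\alpha=1$, produces a factor $J$ after summing squares over $k$. The paper's proof avoids this by expanding $\boldsymbol{\phi}=\sum_i\boldsymbol{\psi}_i$ with $\boldsymbol{\psi}_i=(\boldsymbol{P}_i-\boldsymbol{P}_{i-1})\boldsymbol{\phi}$, using the \emph{fractional} inverse inequality $|\tilde{\boldsymbol{Q}}_k\boldsymbol{\psi}_i|_1\lesssim h_k^{-\alpha}\|\tilde{\boldsymbol{Q}}_k\boldsymbol{\psi}_i\|_{1-\alpha}$, the $H^{1-\alpha}$-stability of $\tilde{\boldsymbol{Q}}_k$ and \eqref{eq:estimateFEM} to get the two-index decay $h_k^{-\alpha}h_i^{\alpha}$, and then closing with a Schur (strengthened Cauchy--Schwarz) summation over $(i,k)$ and the Korn-type bound \eqref{eq:Jkorn1} applied to $\sum_i\|\boldsymbol{\psi}_i\|_1^2$. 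So the alternative route is right in spirit but needs this double-sum structure to actually close.
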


\begin{proof}
Let $\tilde{\boldsymbol{Q}}_k=\boldsymbol{Q}_k-\boldsymbol{Q}_{k-1}$, $\boldsymbol{\phi}_k=\tilde{\boldsymbol{Q}}_k\boldsymbol{\phi}$ and $\boldsymbol{\psi}_i=(\boldsymbol{P}_i-\boldsymbol{P}_{i-1})\boldsymbol{\phi}$ for $i, k=1,2,\cdots, J$. Using Cauchy-Swarchz inequality, it holds
\begin{align*}
\sum_{k=1}^J\|\nabla ^s\times\boldsymbol{\phi}_k\|_0^2=&\sum_{k=1}^J\|\nabla ^s\times(\tilde{\boldsymbol{Q}}_k\boldsymbol{\phi})\|_0^2 \\
=&\sum_{k=1}^J\sum_{i,j=k}^J\int_{\Omega}\nabla ^s\times(\tilde{\boldsymbol{Q}}_k\boldsymbol{\psi}_i):\nabla ^s\times(\tilde{\boldsymbol{Q}}_k\boldsymbol{\psi}_j)\, {\rm d}x \\
=&\sum_{i,j=1}^J\sum_{k=1}^{i\wedge j}\int_{\Omega}\nabla ^s\times(\tilde{\boldsymbol{Q}}_k\boldsymbol{\psi}_i):\nabla ^s\times(\tilde{\boldsymbol{Q}}_k\boldsymbol{\psi}_j)\, {\rm d}x \\
\leq & \sum_{i,j=1}^J\sum_{k=1}^{i\wedge j}\|\nabla ^s\times(\tilde{\boldsymbol{Q}}_k\boldsymbol{\psi}_i)\|_0\|\nabla ^s\times(\tilde{\boldsymbol{Q}}_k\boldsymbol{\psi}_j)\|_0,
\end{align*}
where $i\wedge j=\min\{i,j\}$.
According to the inverse inequality, the error estimate of $\boldsymbol{Q}_k$, and \eqref{eq:estimateFEM}, we have
\[
\|\nabla ^s\times(\tilde{\boldsymbol{Q}}_k\boldsymbol{\psi}_i)\|_0\lesssim |\tilde{\boldsymbol{Q}}_k\boldsymbol{\psi}_i|_1\lesssim h_k^{-\alpha}\|\tilde{\boldsymbol{Q}}_k\boldsymbol{\psi}_i\|_{1-\alpha}\lesssim h_k^{-\alpha}\|\boldsymbol{\psi}_i\|_{1-\alpha}\lesssim h_k^{-\alpha}h_i^{\alpha}\|\boldsymbol{\psi}_i\|_{1}.
\]
Combining the last two inequalities, we get from the strengthened Cauchy-Swarchz inequality and \eqref{eq:Jkorn1}
\begin{align*}
\sum_{k=1}^J\|\nabla ^s\times\boldsymbol{\phi}_k\|_0^2\lesssim & \sum_{i,j=1}^J\sum_{k=1}^{i\wedge j}h_k^{-2\alpha}h_j^{\alpha}h_i^{\alpha}\|\boldsymbol{\psi}_i\|_{1}\|\boldsymbol{\psi}_j\|_{1} \\
\lesssim & \sum_{i,j=1}^Jh_{i\wedge j}^{-2\alpha}h_j^{\alpha}h_i^{\alpha}\|\boldsymbol{\psi}_i\|_{1}\|\boldsymbol{\psi}_j\|_{1}
\lesssim \sum_{i,j=1}^J\gamma^{\alpha|i-j|}\|\boldsymbol{\psi}_i\|_{1}\|\boldsymbol{\psi}_j\|_{1} \\
\lesssim & \sum_{i=1}^J \|\boldsymbol{\psi}_i\|_{1}^2 \lesssim \sum_{i=1}^J \|\nabla ^s\times\boldsymbol{\psi}_i\|_{0}^2=\|\nabla ^s\times\boldsymbol{\phi}\|_{0}^2.
\end{align*}

On the other side, it follows from Lemma~\ref{lem:temp2} and the strengthened Cauchy-Swarchz inequality
\begin{align*}
\|\nabla ^s\times\boldsymbol{\phi}\|_0^2 = & \sum_{i,j=1}^J\int_{\Omega}\nabla ^s\times(\tilde{\boldsymbol{Q}}_i\boldsymbol{\phi}):\nabla ^s\times(\tilde{\boldsymbol{Q}}_j\boldsymbol{\phi})\, {\rm d}x \\
\lesssim & \sum_{i,j=1}^J\gamma^{|i-j|} \|\nabla ^s\times(\tilde{\boldsymbol{Q}}_i\boldsymbol{\phi})\|_0\|\nabla ^s\times(\tilde{\boldsymbol{Q}}_j\boldsymbol{\phi})\|_0 \\
\lesssim & \sum_{i=1}^J\|\nabla ^s\times(\tilde{\boldsymbol{Q}}_i\boldsymbol{\phi})\|_0 =\sum_{i=1}^J\|\nabla ^s\times\boldsymbol{\phi}_i\|_0.
\end{align*}
The proof is completed.
\end{proof}

\begin{lemma}[Stable micro-decomposition]\label{lem:stablemicrodscomposition}
Let $\boldsymbol{\phi}_k=(\boldsymbol{Q}_k-\boldsymbol{Q}_{k-1})\boldsymbol{\phi}$ with $\boldsymbol{\phi}\in \widetilde{\mathcal{S}}_h$. Then based on the decomposition \eqref{eq:Kdec}, there exists $\boldsymbol{\phi}_{k,i}\in \mathcal{S}_{k,i}$, $i=1,2,\cdots, N_k$ such that
\[
\boldsymbol{\phi}_k=\sum_{i=1}^{N_k}\boldsymbol{\phi}_{k,i}, \;\textrm{ and }\; \sum_{i=1}^{N_k}\|\nabla ^s\times\boldsymbol{\phi}_{k,i}\|_0^2\lesssim \|\nabla ^s\times\boldsymbol{\phi}_k\|_0^2.
\]
\end{lemma}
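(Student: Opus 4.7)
The plan is a standard single-level Bramble–Pasciak–Xu-style localization, transplanted from the scalar $H^1$ setting via the energy identity \eqref{eq:temp3}. The crucial preliminary is the inverse-type estimate
\[
h_k^{-1}\|\boldsymbol{\phi}_k\|_0 \lesssim \|\nabla ^s\times\boldsymbol{\phi}_k\|_0.
\]
Since $\mathcal{S}_{k-1}\subset\mathcal{S}_k$ we have $\boldsymbol{Q}_{k-1}\boldsymbol{Q}_k=\boldsymbol{Q}_{k-1}$ and therefore $\boldsymbol{Q}_{k-1}\boldsymbol{\phi}_k=\boldsymbol{0}$; in particular $\boldsymbol{\phi}_k\perp\overline{\boldsymbol{P}}_1(\Omega;\mathbb{R}^2)$, so $\boldsymbol{\phi}_k\in\widetilde{\mathcal{S}}_k$. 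Testing against an $\boldsymbol{H}^1$-stable quasi-interpolation $\boldsymbol{R}_{k-1}\boldsymbol{\phi}_k\in\mathcal{S}_{k-1}$ (Cl\'ement or Scott–Zhang) yields
\[
\|\boldsymbol{\phi}_k\|_0^2=(\boldsymbol{\phi}_k,\boldsymbol{\phi}_k-\boldsymbol{R}_{k-1}\boldsymbol{\phi}_k)\lesssim h_k\|\boldsymbol{\phi}_k\|_0\|\boldsymbol{\phi}_k\|_1,
\]
which, combined with \eqref{eq:Jkorn1}, delivers the claim.

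For the localization itself, I would use the vertex hat functions $\{\theta_{k,i}\}_{i=1}^{N_k}\subset\mathcal{S}_k$ at level $k$, a partition of unity with $\operatorname{supp}(\theta_{k,i})\subset\omega_{k,i}$, $\|\theta_{k,i}\|_\infty\leq 1$ and $\|\nabla\theta_{k,i}\|_\infty\lesssim h_k^{-1}$. Letting $\boldsymbol{I}_k$ denote the componentwise Lagrange interpolation onto $\mathcal{S}_k$, define
\[
\boldsymbol{\phi}_{k,i}:=\boldsymbol{I}_k(\theta_{k,i}\boldsymbol{\phi}_k).
\]
Because $\theta_{k,i}\boldsymbol{\phi}_k$ is continuous, piecewise polynomial and supported in $\omega_{k,i}$, one has $\boldsymbol{\phi}_{k,i}\in\mathcal{S}_{k,i}$; and since $\boldsymbol{I}_k$ restricts to the identity on $\mathcal{S}_k$ and $\sum_i\theta_{k,i}=1$, $\sum_i\boldsymbol{\phi}_{k,i}=\boldsymbol{I}_k\boldsymbol{\phi}_k=\boldsymbol{\phi}_k$.

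It remains to chain the bounds. Elementwise stability of Lagrange interpolation on polynomial pieces gives $\|\boldsymbol{\phi}_{k,i}\|_0\lesssim\|\theta_{k,i}\boldsymbol{\phi}_k\|_{0,\omega_{k,i}}\leq\|\boldsymbol{\phi}_k\|_{0,\omega_{k,i}}$, so finite overlap of the patches yields $\sum_i\|\boldsymbol{\phi}_{k,i}\|_0^2\lesssim\|\boldsymbol{\phi}_k\|_0^2$. Combining the inverse inequality on $\mathcal{S}_{k,i}$, the pointwise bound $\|\nabla^s\times\boldsymbol{\psi}\|_0=\|\boldsymbol{\varepsilon}(\boldsymbol{\psi}^\perp)\|_0\leq|\boldsymbol{\psi}|_1$ from \eqref{eq:temp3}, and the preliminary estimate one obtains
\[
\sum_{i=1}^{N_k}\|\nabla^s\times\boldsymbol{\phi}_{k,i}\|_0^2 \lesssim \sum_{i=1}^{N_k}|\boldsymbol{\phi}_{k,i}|_1^2 \lesssim h_k^{-2}\sum_{i=1}^{N_k}\|\boldsymbol{\phi}_{k,i}\|_0^2 \lesssim h_k^{-2}\|\boldsymbol{\phi}_k\|_0^2 \lesssim \|\nabla^s\times\boldsymbol{\phi}_k\|_0^2.
\]
The main obstacle is the first-paragraph inverse estimate: $\boldsymbol{\phi}_k$ only lies in $\mathcal{S}_k$, so the $h_k$-gain cannot come from its own polynomial degree, and must instead be harvested from the $L^2$-orthogonality $\boldsymbol{Q}_{k-1}\boldsymbol{\phi}_k=\boldsymbol{0}$ via duality against the approximation error of a stable coarse interpolant; everything after that is routine partition-of-unity bookkeeping.
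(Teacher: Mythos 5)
Your proposal is correct and follows essentially the same route as the paper: a patch-supported splitting of $\boldsymbol{\phi}_k$ (the paper partitions the nodal basis coefficients among vertex patches, you realize the same thing via $\boldsymbol{I}_k(\theta_{k,i}\boldsymbol{\phi}_k)$ with a hat-function partition of unity), followed by $L^2$-stability of the splitting, the inverse inequality, and the gain of $h_k$ from $\|\boldsymbol{\phi}_k\|_0=\|(\boldsymbol{I}-\boldsymbol{Q}_{k-1})\boldsymbol{\phi}_k\|_0\lesssim h_k\|\boldsymbol{\phi}_k\|_1\lesssim h_k\|\nabla^s\times\boldsymbol{\phi}_k\|_0$ via \eqref{eq:Jkorn1}. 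Your duality argument against a coarse quasi-interpolant is just a re-derivation of the stated first-order error estimate of $\boldsymbol{Q}_{k-1}$, which the paper invokes directly.
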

\begin{proof}
Let $\boldsymbol \phi_{k} = \sum _{j=1}^{N_k}\boldsymbol \phi_{k,j}$ be a decomposition such that $\supp \boldsymbol\phi_{k,j}\in \omega_{k, j}$. Such a decomposition can be obtained by partitioning the nodal basis decomposition of $\boldsymbol \phi_{k}$. For example, for a basis function associated to an edge, it can be split as half and half to the patch of each endpoint of this edge.

According to the inverse inequality and the stability of the basis decomposition in $L^2$-norm, we have
\[
\sum_{i=1}^{N_k}\|\nabla ^s\times\boldsymbol{\phi}_{k,i}\|_0^2\lesssim h_k^{-2}\sum_{i=1}^{N_k}\|\boldsymbol{\phi}_{k,i}\|_0^2\lesssim h_k^{-2}\|\boldsymbol{\phi}_{k}\|_0^2.
\]
Since $\boldsymbol{\phi}_{k}=(\boldsymbol{I}-\boldsymbol{Q}_{k-1})\boldsymbol{\phi}_{k}$, it holds from the estimate of $\boldsymbol{Q}_{k-1}$ and \eqref{eq:Jkorn1}
\[
\|\boldsymbol{\phi}_{k}\|_0=\|(\boldsymbol{I}-\boldsymbol{Q}_{k-1})\boldsymbol{\phi}_{k}\|_0\lesssim h_k\|\boldsymbol{\phi}_{k}\|_1\lesssim h_k \|\nabla ^s\times\boldsymbol{\phi}_k\|_0.
\]
Therefore we can finish the proof from the last two inequalities. \end{proof}

Hence the following multilevel stable decomposition of $\mathcal K_h$ can be derived by the combination of Lemmas~\ref{lem:stablemacrodscomposition}-\ref{lem:stablemicrodscomposition}.
\begin{theorem}[Stable decomposition]
For each $\boldsymbol{\sigma}\in\mathcal{K}_h$, there exists $\boldsymbol{\sigma}_{k,i}\in\mathcal{K}_{k,i}$, $k=1,2,\cdots, J$, $i=1,2,\cdots, N_k$
such that $\boldsymbol{\sigma}$
\[
\boldsymbol{\sigma}=\sum_{k=1}^{J}\sum_{i=1}^{N_k}\boldsymbol{\sigma}_{k,i} \;\textrm{ and }\; \sum_{k=1}^{J}\sum_{i=1}^{N_k}\|\boldsymbol{\sigma}_{k,i}\|_0^2\lesssim \|\boldsymbol{\sigma}\|_0^2.
\]
\end{theorem}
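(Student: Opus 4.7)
The plan is to derive this global multilevel stable decomposition of $\mathcal{K}_h$ by composing the macro-decomposition of Lemma~\ref{lem:stablemacrodscomposition} with the micro-decomposition of Lemma~\ref{lem:stablemicrodscomposition}, after lifting $\boldsymbol{\sigma}$ to a vector potential via the discrete exact sequence~\eqref{exactsequence2}.

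First, I would invoke Lemma~\ref{lem:hhjexactsequence} to pick a potential $\boldsymbol{\phi}\in\mathcal{S}_h$ satisfying $\boldsymbol{\sigma}=\nabla^s\times\boldsymbol{\phi}$. Because $\nabla^s\times$ annihilates $\overline{\boldsymbol{P}}_1(\Omega;\mathbb{R}^2)$ and $\mathcal{S}_h=\widetilde{\mathcal{S}}_h\oplus\overline{\boldsymbol{P}}_1(\Omega;\mathbb{R}^2)$, I may subtract off the $\overline{\boldsymbol{P}}_1$-component of $\boldsymbol{\phi}$ without altering $\boldsymbol{\sigma}$, and thereby assume $\boldsymbol{\phi}\in\widetilde{\mathcal{S}}_h$. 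This normalization is needed so that the $L^2$ projections $\boldsymbol{Q}_k$ land in $\widetilde{\mathcal{S}}_k$ and so that the Korn-type inequality \eqref{eq:Jkorn1} (which underlies both preceding lemmas) is applicable.

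Next, I would apply Lemma~\ref{lem:stablemacrodscomposition} and take the explicit macro-decomposition from its proof: set $\boldsymbol{\phi}_k:=(\boldsymbol{Q}_k-\boldsymbol{Q}_{k-1})\boldsymbol{\phi}\in\widetilde{\mathcal{S}}_k$, so that $\boldsymbol{\phi}=\sum_{k=1}^J\boldsymbol{\phi}_k$ and $\sum_{k=1}^J\|\nabla^s\times\boldsymbol{\phi}_k\|_0^2\eqsim \|\nabla^s\times\boldsymbol{\phi}\|_0^2$. This is precisely the form of $\boldsymbol{\phi}_k$ required by the hypothesis of Lemma~\ref{lem:stablemicrodscomposition}, which I would then apply level-by-level to obtain a further decomposition $\boldsymbol{\phi}_k=\sum_{i=1}^{N_k}\boldsymbol{\phi}_{k,i}$ with $\boldsymbol{\phi}_{k,i}\in\mathcal{S}_{k,i}$ and $\sum_{i=1}^{N_k}\|\nabla^s\times\boldsymbol{\phi}_{k,i}\|_0^2\lesssim \|\nabla^s\times\boldsymbol{\phi}_k\|_0^2$.

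Finally, set $\boldsymbol{\sigma}_{k,i}:=\nabla^s\times\boldsymbol{\phi}_{k,i}\in\mathcal{K}_{k,i}$. Linearity of $\nabla^s\times$ gives $\boldsymbol{\sigma}=\sum_{k=1}^J\sum_{i=1}^{N_k}\boldsymbol{\sigma}_{k,i}$, while chaining the two stability bounds together with $\|\boldsymbol{\sigma}\|_0=\|\nabla^s\times\boldsymbol{\phi}\|_0$ yields
\[
\sum_{k=1}^J\sum_{i=1}^{N_k}\|\boldsymbol{\sigma}_{k,i}\|_0^2=\sum_{k=1}^J\sum_{i=1}^{N_k}\|\nabla^s\times\boldsymbol{\phi}_{k,i}\|_0^2\lesssim \sum_{k=1}^J\|\nabla^s\times\boldsymbol{\phi}_k\|_0^2\lesssim \|\boldsymbol{\sigma}\|_0^2,
\]
which is the desired multilevel stable decomposition. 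There is no real obstacle at this stage: the genuine analytic difficulties (Korn's inequality on the quotient, the strengthened Cauchy--Schwarz of Lemma~\ref{lem:temp1}, and the use of the elasticity operator $\boldsymbol{R}_k$ together with the regularity exponent $\alpha$) have already been absorbed into Lemmas~\ref{lem:stablemacrodscomposition}--\ref{lem:stablemicrodscomposition}, and the only new ingredient here is the discrete exact sequence which transfers these estimates from the potentials in $\widetilde{\mathcal{S}}_h$ to the stress-like field $\boldsymbol{\sigma}\in\mathcal{K}_h$.
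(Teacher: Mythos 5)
Your proposal is correct and follows essentially the same route as the paper's own proof: lift $\boldsymbol{\sigma}$ to a potential in $\widetilde{\mathcal{S}}_h$ via the discrete exact sequence, apply the macro-decomposition $\boldsymbol{\phi}_k=(\boldsymbol{Q}_k-\boldsymbol{Q}_{k-1})\boldsymbol{\phi}$ of Lemma~\ref{lem:stablemacrodscomposition}, refine each level with Lemma~\ref{lem:stablemicrodscomposition}, and push forward by $\nabla^s\times$. Your explicit remark about subtracting the $\overline{\boldsymbol{P}}_1$-component to normalize the potential into $\widetilde{\mathcal{S}}_h$ is a detail the paper leaves implicit, but the argument is the same.
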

\begin{proof}
Since $\boldsymbol{\sigma}\in\mathcal{K}_h$, we can find a unique element $\boldsymbol \phi\in \widetilde{\mathcal S}_h$ such that $\boldsymbol{\sigma}=\nabla ^s\times\boldsymbol{\phi}$. Let $\boldsymbol{\phi}_k=(\boldsymbol{Q}_k-\boldsymbol{Q}_{k-1})\boldsymbol{\phi}$.
We get from Lemma~\ref{lem:stablemacrodscomposition}
\begin{equation}\label{eq:temp6}
\boldsymbol{\phi}=\sum_{k=1}^J\boldsymbol{\phi}_k, \;\textrm{ and }\; \sum_{k=1}^J\|\nabla ^s\times\boldsymbol{\phi}_k\|_0^2\eqsim \|\nabla ^s\times\boldsymbol{\phi}\|_0^2.
\end{equation}
Then we apply Lemma~\ref{lem:stablemicrodscomposition} to obtain a decomposition of $\boldsymbol{\phi}_k$
such that
\begin{equation}\label{eq:temp7}
\boldsymbol{\phi}_k=\sum_{i=1}^{N_k}\boldsymbol{\phi}_{k,i}, \;\textrm{ and }\; \sum_{i=1}^{N_k}\|\nabla ^s\times\boldsymbol{\phi}_{k,i}\|_0^2\lesssim \|\nabla ^s\times\boldsymbol{\phi}_k\|_0^2
\end{equation}
with $\boldsymbol{\phi}_{k,i}\in \mathcal{S}_{k,i}$ for $i=1,2,\cdots, N_k$ and $k=1,2,\cdots, J$.
Now let $\boldsymbol{\sigma}_{k,i}=\nabla ^s\times\boldsymbol{\phi}_{k,i}\in\mathcal{K}_{k,i}$.
It is apparent that
\[
\boldsymbol{\sigma}=\sum_{k=1}^{J}\sum_{i=1}^{N_k}\boldsymbol{\sigma}_{k,i}.
\]
Moreover, it follows from \eqref{eq:temp6}-\eqref{eq:temp7}
\[
\sum_{k=1}^{J}\sum_{i=1}^{N_k}\|\boldsymbol{\sigma}_{k,i}\|_0^2=\sum_{k=1}^{J}\sum_{i=1}^{N_k}\|\nabla ^s\times\boldsymbol{\phi}_{k,i}\|_0^2\lesssim \sum_{k=1}^J\|\nabla ^s\times\boldsymbol{\phi}_k\|_0^2\lesssim \|\nabla ^s\times\boldsymbol{\phi}\|_0^2= \|\boldsymbol{\sigma}\|_0^2.
\]
The proof is ended.
\end{proof}

\section{Multigrid Methods for the HHJ mixed method}
In this section we shall develop a multigrid method using an overlapping Schwarz smoother for the HHJ mixed method and prove its uniform convergence.
We first solve a Poisson equation with a Dirichlet boundary condition to transfer the source.
Then we apply the multilevel method advised in~\cite{ChenSaddle} and the space decomposition \eqref{eq:Kdec} to obtain a V-cycle multigrid method with an overlapping Schwarz smoother for the HHJ mixed method. We analyze the V-cycle multigrid method by using the stable decomposition and the strengthened Cauchy Schwarz inequality.

\subsection{Reformulation}
We change the source to the first equation in the saddle point system~\eqref{mfem1}-\eqref{mfem2}. One possibility is as follows: let $w_h\in \mathcal{P}_h$ be the solution of
\[
\int_{\Omega}\boldsymbol{\nabla}w_h\cdot \boldsymbol{\nabla}v_h\,{\rm d}x=\int_{\Omega}fv_h\, {\rm d}x \quad \forall~v_h\in\mathcal{P}_h.
\]
This is the standard Poisson equation which can be solved efficiently by multigrid methods. Let $\boldsymbol{\sigma}_0=\left(
\begin{array}{cc}
w_h & 0 \\
0 & w_h
\end{array}
\right)$. According to the proof of Lemma~\ref{lem:hhjexactsequence}, we have
$M_n(\boldsymbol{\sigma}_0)=w_h$, $\boldsymbol{\sigma}_0\in\mathcal{V}$, $\boldsymbol{\Pi}_h\boldsymbol{\sigma}_0\in\mathcal{V}_h$ and $(\mathrm{div}\boldsymbol{\mathrm{div}})_h\boldsymbol\Pi_h \boldsymbol{\sigma}_0 = - Q_h f$, i.e.,
\[
b(\boldsymbol{\Pi}_h\boldsymbol{\sigma}_0, v_h)=-\int_{\Omega}fv_h\, {\rm d}x\quad \forall v_h \in \mathcal P_h.
\]
Now set $\boldsymbol{\sigma}_h=\widetilde{\boldsymbol{\sigma}}_h+\boldsymbol{\Pi}_h\boldsymbol{\sigma}_0$, then the HHJ mixed method~\eqref{mfem1}-\eqref{mfem2} is equivalent to: Find $(\widetilde{\boldsymbol{\sigma}}_h, u_h)\in \mathcal{V}_h\times \mathcal{P}_h$ such that
\begin{align}
&a(\widetilde{\boldsymbol{\sigma}}_h,\boldsymbol{\tau})+b(\boldsymbol{\tau}, u_h)=-a(\boldsymbol{\Pi}_h\boldsymbol{\sigma}_0,\boldsymbol{\tau}) \quad \forall\,\boldsymbol{\tau}\in\mathcal{V}_h, \label{mfemnew1} \\
&b(\widetilde{\boldsymbol{\sigma}}_h, v)\quad\quad\quad\quad\;\;\;=0 \quad\quad\quad\quad\quad\quad\; \forall\,v\in \mathcal{P}_h. \label{mfemnew2}
\end{align}
After obtaining $\boldsymbol{\sigma}_h$, due to Theorems~5.1-5.2 in~\cite{KrendlRafetsederZulehner2016},  we can acquire deflection by solving the following Poisson problem using standard multigrid methods:
Find $u_h\in \mathcal{P}_h$ such that
\[
\int_{\Omega}\boldsymbol{\nabla}u_h\cdot \boldsymbol{\nabla}v_h\,{\rm d}x=a(\boldsymbol{\sigma}_h, \boldsymbol{\Pi}_h\boldsymbol{\tau}_0) \quad \forall~v_h\in\mathcal{P}_h.
\]
with $\boldsymbol{\tau}_0=\left(
\begin{array}{cc}
v_h & 0 \\
0 & v_h
\end{array}
\right)$.

Our multigrid method is actually developed for solving \eqref{mfemnew1}-\eqref{mfemnew2}.

\subsection{A V-cycle Multigrid Method}
We shall use the multilevel methods for constrained minimization problems developed in~\cite{ChenSaddle} and adapt to the HHJ mixed method under consideration.
For simplicity, we consider the lowest order HHJ mixed method for which $\mathcal V_h$ consists of piecewise constant symmetric matrix function and normal-normal component is continuous, $S_h$ is the standard linear finite element space for vector functions, and $\mathcal P_h$ is the linear finite element space with zero boundary condition for scalar functions. For the high order HHJ mixed method, we can combine the multigrid cycles for the lowest order and an overlapping Schwarz smoother in the finest level to design efficient multigrid solvers.

Let $\boldsymbol{M}_k:\mathcal{V}_k\to\mathcal{V}_k$ be the mass operator associated with the bilinear form $a(\cdot,\cdot)$: for any $\boldsymbol{\tau}\in\mathcal{V}_k$, $\boldsymbol{M}_k\boldsymbol{\tau}\in\mathcal{V}_k$ is uniquely determined by
\[
\int_{\Omega}\boldsymbol{M}_k\boldsymbol{\tau}:\boldsymbol{\varsigma}\,{\rm d}x=a(\boldsymbol{\tau}, \boldsymbol{\varsigma})\quad \forall~\boldsymbol{\varsigma}\in\mathcal{V}_k.
\]
The mixed variational problem in the $k$-th level is:
Find $(\widetilde{\boldsymbol{\sigma}}_k, u_k)\in \mathcal{V}_k\times \mathcal{P}_k$ such that
\begin{equation}\label{hhjk}
\begin{aligned}
&a(\widetilde{\boldsymbol{\sigma}}_k,\boldsymbol{\tau})+b(\boldsymbol{\tau}, u_k)=\int_{\Omega}\boldsymbol{r}_k:\boldsymbol{\tau}\, {\rm d}x \quad \forall\,\boldsymbol{\tau}\in\mathcal{V}_k,  \\
&b(\widetilde{\boldsymbol{\sigma}}_k, v)\quad\quad\quad\quad\;\;\;=0 \quad\quad\quad\quad\quad\quad\; \forall\,v\in \mathcal{P}_k,
\end{aligned}
\end{equation}
with the residual $\boldsymbol{r}_k\in\boldsymbol{L}^2(\Omega; \mathbb{S})$.

As we mentioned before, the smoother in each level is an overlapping Schwarz method. To simplify the notation, we skip the level index $k$ and describe the local problem in each subspace $\mathcal V_i$ (of a given level $k$) below.
Define $\boldsymbol M_i: \mathcal V_i \to \mathcal V_i$ as for $\boldsymbol \sigma_i\in \mathcal V_i$, $\boldsymbol M\boldsymbol \sigma_i\in \mathcal V_i$ such that $(\boldsymbol M_i \boldsymbol \sigma_i, \boldsymbol \tau_i) = (\boldsymbol M\boldsymbol \sigma_i, \boldsymbol \tau_i)$ for all $\boldsymbol \tau_i \in \mathcal V_i$. Let $\mathcal P_i = \mathcal P\cap \mathrm{div}\boldsymbol{\mathrm{div}}_h(\mathcal V_i)$. Define $B_i: \mathcal V_i \to \mathcal P_i$
as for $\boldsymbol \sigma_i\in \mathcal V_i$, $\mathrm{div}\boldsymbol{\mathrm{div}}_h\boldsymbol \sigma_i\in \mathcal P_i$ such that $(B_i \boldsymbol \sigma_i, v_i) = (\mathrm{div}\boldsymbol{\mathrm{div}}_h\boldsymbol \sigma_i, v_i)$ for all $v_i \in \mathcal P_i$.
\begin{equation}\label{localproblem}
\begin{pmatrix}
\boldsymbol M_i & B^T_i\\
B_i & O
\end{pmatrix}
\begin{pmatrix}
\boldsymbol e_i\\
u_i
\end{pmatrix}
=
\begin{pmatrix}
f - \boldsymbol M\boldsymbol \sigma_{i-1}\\
0
\end{pmatrix}.
\end{equation}

Let $\omega_i$ be the support of $\mathcal V_i$. For the lowest order HHJ mixed method, this is the patch of the $i$-th vertex of the triangulation in the given level. The space $\mathcal V_i$ is spanned by basis functions associated to all edges connecting to the $i$-th vertex. The matrix representation of $\boldsymbol M_i$ can be extracted from the global one using the edge index in $\omega_i$. The right-hand side is the corresponding components of $f$ minus the contribution from the current approximation. Note that $\boldsymbol M\boldsymbol\sigma_{i-1}$ only need to be computed locally by including the boundary edge index of $\partial \omega_i$. The exact space $\mathcal P_i$ is somehow difficulty to identify. We shall work on the space $\mathcal K_i$ instead. An algebraic way to find $\mathcal K_i$ is as follows. We extract a sub-matrix of $B_i$ consisting of all nonzero entries associated to the edge index in $\mathcal V_i$ and compute $\ker(B_i)$ numerically. An alternative way is computing $\nabla ^s\times\boldsymbol \phi_i$ where $\boldsymbol \phi_i$ is the vector hat function associated to vertex $i$.

\smallskip
\begin{remark}\rm
Since $\ker(B_h)=\nabla ^s\times\mathcal{S}_h$ due to the exact sequence \eqref{exactsequence2}, the mixed method \eqref{mfemnew1}-\eqref{mfemnew2} can be rewritten as: Find $\boldsymbol{\phi}_h\in \widetilde{\mathcal{S}}_h$ such that
\begin{equation}\label{differentMG}
a(\nabla ^s\times\boldsymbol{\phi}_h, \nabla ^s\times\boldsymbol{\psi})=-a(\boldsymbol{\Pi}_h\boldsymbol{\sigma}_0, \nabla ^s\times\boldsymbol{\psi}) \quad \forall\,\boldsymbol{\psi}\in\widetilde{\mathcal{S}}_h
\end{equation}
with $\widetilde{\boldsymbol{\sigma}}_h=\nabla ^s\times\boldsymbol{\phi}_h$. By the theory in~\cite{LeeWuXuZikatanov2008}, this symmetric and positive semidefinite problem can be solved by multigrid methods efficiently. Solving the mixed method \eqref{mfemnew1}-\eqref{mfemnew2} is essentially equivalent to the multigrid method developed for \eqref{differentMG}. $\Box$
\end{remark}
\smallskip

We then discuss the prolongation and restriction operators. Since both finite element spaces $\mathcal{V}_k$ and $\mathcal{P}_k$ are nested,
 the prolongations $\boldsymbol{I}_{k-1}^k: \mathcal{V}_{k-1}\to \mathcal{V}_k$ and $I_{k-1}^k: \mathcal{P}_{k-1}\to \mathcal{P}_{k}$ are chosen as the natural inclusions.
Set the restriction $\boldsymbol{I}_k^{k-1}:=(\boldsymbol{I}_{k-1}^k)^T$ and $I_k^{k-1}:=(I_{k-1}^k)^T$.
With the restriction and prolongation matrix, the matrices $\boldsymbol M_k$ and $B_k$ in each level can be obtained by the standard triple product.

With previous preparation, a V-cycle multigrid method for problem~\eqref{mfemnew1}-\eqref{mfemnew2} is summarized in Algorithm~\ref{alg:multigrid} with $\boldsymbol{r}_J=-\boldsymbol{M}_J\boldsymbol{\Pi}_J\boldsymbol{\sigma}_0$.
\begin{algorithm}\label{alg:multigrid}
\TitleOfAlgo{ ${\rm MG}(k, \widetilde{\boldsymbol{\sigma}}_{k}, \boldsymbol{r}_k)$}
\If {$k=1$}{solve problem \eqref{hhjk} exactly\;}
\If {$k>1$}{
\emph{Presmoothing}\\
\For{$j=1:m_1$}{
$\widetilde{\boldsymbol{\sigma}}_{k,0}\leftarrow \widetilde{\boldsymbol{\sigma}}_{k}$\;
\For{$i=1:N_k$}{
Update $\widetilde{\boldsymbol{\sigma}}_{k,i}$ by solving local problem \eqref{localproblem}\;
}
$\widetilde{\boldsymbol{\sigma}}_{k}\leftarrow \widetilde{\boldsymbol{\sigma}}_{k,N_k}$\;
}
\emph{Coarse grid correction}\\
$\boldsymbol{r}_{k-1}\leftarrow\boldsymbol{I}_k^{k-1}(\boldsymbol{r}_{k}-\boldsymbol{M}_k\widetilde{\boldsymbol{\sigma}}_k)$\;
$e_{k-1}^{\widetilde{\boldsymbol{\sigma}}}\leftarrow{\rm MG}(k-1, \boldsymbol{0}, \boldsymbol{r}_{k-1})$\;
$\widetilde{\boldsymbol{\sigma}}_{k}\leftarrow \widetilde{\boldsymbol{\sigma}}_{k}+\boldsymbol{I}_{k-1}^ke_{k-1}^{\widetilde{\boldsymbol{\sigma}}}$\;
\emph{Postsmoothing}\\
\For{$j=1:m_2$}{
$\widetilde{\boldsymbol{\sigma}}_{k,0}\leftarrow \widetilde{\boldsymbol{\sigma}}_{k}$\;
\For{$i=N_k:-1:1$}{
Update $\widetilde{\boldsymbol{\sigma}}_{k,i}$ by solving local problem \eqref{localproblem}\;
}
$\widetilde{\boldsymbol{\sigma}}_{k}\leftarrow \widetilde{\boldsymbol{\sigma}}_{k,N_k}$\;
}
}
\medskip
\caption{A V-cycle multigrid method for problem~\eqref{mfemnew1}-\eqref{mfemnew2}.}
\end{algorithm}

The $A$-norm introduced by $a(\cdot,\cdot)$ on $\mathcal V_h$ is equivalent to the $L^2$-norm. With the stable decomposition and the strengthened Cauchy-Schwarz inequality proved in Section~\ref{section:sdscs}, applying the framework developed in \cite{ChenSaddle}, we concluded that multigrid method Algorithm~\ref{alg:multigrid} is a contraction with contraction number bounded away from one uniformly with respect to mesh size as follows.
\begin{theorem}\label{thm:vmgconvergencerate}
Let $(\widetilde{\boldsymbol{\sigma}}_h, u_h)$ be the solution of the mixed method \eqref{mfemnew1}-\eqref{mfemnew2}. Given an initial guess $\widetilde{\boldsymbol{\sigma}}_{0}\in \mathcal{V}_h$,  let $\widetilde{\boldsymbol{\sigma}}^{k}$ be the $k$th iteration in Algorithm~\ref{alg:multigrid}. Then there exists a constant $\delta \in (0,1)$ independent of the mesh level such that
$$
\|\widetilde{\boldsymbol{\sigma}}_h-\widetilde{\boldsymbol{\sigma}}^{k+1}\|_A^2\leq \delta \|\widetilde{\boldsymbol{\sigma}}_h-\widetilde{\boldsymbol{\sigma}}^{k}\|_A^2
$$
with $\|\boldsymbol{\tau}\|_A^2:=a(\boldsymbol{\tau}, \boldsymbol{\tau})$.
\end{theorem}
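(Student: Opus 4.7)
The plan is to apply the abstract multigrid convergence framework of \cite{ChenSaddle} for saddle point problems, verifying that its hypotheses follow directly from the constructions in Section~\ref{section:sdscs}. The first step is to recognize that the constraint $b(\widetilde{\boldsymbol{\sigma}}_h,v)=0$ for all $v\in\mathcal{P}_h$ restricts the iterate $\widetilde{\boldsymbol{\sigma}}_h$ to the kernel space $\mathcal{K}_h=\ker((\mathrm{div}\boldsymbol{\mathrm{div}})_h)=\nabla^s\times\mathcal{S}_h$ by the discrete exact sequence~\eqref{exactsequence2}. On $\mathcal{K}_h$ the bilinear form $a(\cdot,\cdot)$ is symmetric positive definite, and on $\mathcal{V}_h$ the induced $A$-norm is equivalent to the $L^2$-norm since $\mathcal{C}$ is SPD. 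Thus Algorithm~\ref{alg:multigrid} can be reinterpreted as a subspace correction method on the Hilbert space $(\mathcal{K}_h,a(\cdot,\cdot))$, where the admissible directions at level $k$ and patch $i$ are exactly $\mathcal{K}_{k,i}=\nabla^s\times\mathcal{S}_{k,i}$.

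Next I would verify that each local solve \eqref{localproblem} produces precisely the $A$-orthogonal projection onto the local kernel $\mathcal{K}_{k,i}$. Indeed, the block structure of \eqref{localproblem} with the local Lagrange multiplier $u_i$ enforces the local constraint $B_i\boldsymbol{e}_i=0$, which by construction of $\mathcal{P}_i=(\mathrm{div}\boldsymbol{\mathrm{div}})(\mathcal{V}_{k,i})$ is equivalent to $\boldsymbol{e}_i\in\mathcal{K}_{k,i}$. Hence the smoother is exactly the multiplicative Schwarz iteration associated with the overlapping decomposition~\eqref{eq:Kdec}, and the method is an $A$-orthogonal successive subspace correction on $\mathcal{K}_h$.

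Having identified the abstract structure, the hypotheses of the framework in~\cite{ChenSaddle} reduce to two requirements on the decomposition $\mathcal{K}_h=\sum_{k,i}\mathcal{K}_{k,i}$: a stable decomposition with constant $C_0$ in the $A$-norm, and a strengthened Cauchy--Schwarz inequality with constant $C_1$. The stable decomposition is provided by the Stable Decomposition Theorem at the end of Section~\ref{section:sdscs} (using that $a(\cdot,\cdot)$ is equivalent to $\|\cdot\|_0^2$), and the SCS inequality is the SCS Theorem stated earlier in the same section. Plugging these two ingredients into the abstract convergence estimate yields the contraction
\[
\|\widetilde{\boldsymbol{\sigma}}_h-\widetilde{\boldsymbol{\sigma}}^{k+1}\|_A^2\le \Bigl(1-\tfrac{1}{c\,C_0(1+C_1)^2}\Bigr)\|\widetilde{\boldsymbol{\sigma}}_h-\widetilde{\boldsymbol{\sigma}}^{k}\|_A^2,
\]
with constants independent of $J$ and $h$, which gives the desired $\delta\in(0,1)$.

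The main obstacle is essentially bookkeeping rather than a new estimate: I must carefully check that the algorithmic smoother defined through the indefinite local systems \eqref{localproblem} agrees with the $A$-projection onto $\mathcal{K}_{k,i}$ used in the abstract framework, and that the coarse-grid correction (natural inclusion of nested spaces, triple-product coarse operators) is the exact solve on $\mathcal{K}_{k-1}\subset\mathcal{K}_h$. Once this identification is made, no elliptic regularity is invoked because the stable decomposition in Lemma~\ref{lem:stablemacrodscomposition} is built from $L^2$-projections and Korn's inequality for the rotated displacement, and one smoothing step suffices because the abstract bound depends only on $C_0$ and $C_1$ and not on the number of smoothing steps.
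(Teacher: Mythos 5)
Your proposal is correct and follows essentially the same route as the paper: the paper's own argument is precisely to note that the $A$-norm is equivalent to the $L^2$-norm on $\mathcal V_h$, restrict to the kernel $\mathcal K_h=\nabla^s\times\mathcal S_h$ via the discrete exact sequence, and invoke the abstract framework of the cited multilevel theory with the stable decomposition and strengthened Cauchy--Schwarz inequality of Section~\ref{section:sdscs} as its two hypotheses. Your additional care in identifying the local saddle-point solves \eqref{localproblem} with $A$-orthogonal corrections in $\mathcal K_{k,i}$ is a detail the paper leaves implicit, but it does not change the argument.
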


\subsection{Numerical Results}
To confirm the theoretical results established in the previous
sections, numerical experiments are carried out. The simulation is implemented using the MATLAB software package $i$FEM~\cite{Chen.L2008c}.
Set $r=1$.
Starting from an initial grid, several uniform refinement are applied to obtain a fine mesh. The level listed in the first column indicates how many refinements applied and the size of the saddle point system is listed in the second column. The stopping criterion  is the relative residual is less than $10^{-8}$. The iteration steps are reported in Table \ref{tab:iteration}.

We test two examples. One is a square $\Omega=(0,1)\times(0,1)$ and another is an L-shaped domain $\Omega=(-1,1)\times(-1,1)\backslash
[0,1)\times(-1,0]$. For the square domain, the Poisson ratio is $\nu = 0.3$ and the exact solution of \eqref{kirchhoffplate} is chosen as
\[
u(x, y)=(x^2 - x)^2( y^2 - y)^2.
\]
And for L-shaped domain, we simply set $f = 1$ and the Poisson ratio $\nu = 0$.
The later example is to test the multigrid method for problems without full regularity assumption. From Table~\ref{tab:iteration} we can see that the iteration steps of V-cycle multigrid method almostly remain invariant when the mesh size becomes smaller and smaller, as Theorem~\ref{thm:vmgconvergencerate} indicates.
Moreover through the comparison of different number of smoothings, we conclude that one smoothing is enough. Two smoothing steps will save only few iteration steps but with more computational cost since the cost of one V-cycle with $2$ smoothing steps is almost doubled that with $1$ smoothing step. This indeed shows the advantage of removing the assumption of requiring large enough smoothing steps.
These numerical results are all in coincide with the theoretical result Theorem~\ref{thm:vmgconvergencerate}.
At last, it is observed from Table~\ref{table:errors} that the convergence rates of $\|u-u_h\|_{0}$ and $|u-u_h|_1$ for the unit square example with $\nu = 0.3$ are $O(h^2)$ and $O(h)$ respectively, both of which are optimal.

\begin{table}[htbp]
\caption{Iteration steps of V-cycle multigrid for the saddle point system with $(m_1,m_2)$: $m_1$ pre-smoothing and $m_2$ post-smoothing steps. Stopping criterion  is the relative residual is less than $10^{-8}$. The left table is on the unit square example with $\nu = 0.3$ and the right one is the L-shaped domain example with $\nu = 0$.}
\begin{center}
\begin{tabular}{cccc || cccc}
\hline \hline
level & size & $(1,1)$ & $(2,2)$ & level & size & $(1,1)$ & $(2,2)$ \\
\hline
3 & 1,089 & 18 & 14 & 3 & 833 & 13 & 11\\
\hline
4 & 4,225 & 21 & 15 & 4 & 3,201 & 17 & 14\\
\hline
5 & 16,641 & 22 & 16 & 5 & 12,545 & 19 & 16\\
\hline
6 & 66,049 & 23 & 16 & 6 & 49,665 & 20 & 17\\
\hline \hline
\end{tabular}
\end{center}
\label{tab:iteration}
\end{table}%
\begin{table}[!h]
\tabcolsep 5pt \caption{Numerical errors for the unit square example with $\nu = 0.3$.}\label{table:errors} 
\begin{center}
\begin{tabular}{|c|c|c|c|c|}
\hline  level & $\|u-u_h\|_{0}$ & order & $|u-u_h|_1$ & order \\
\hline $2$ & 4.8576E-04 & $-$ & 3.2658E-03 & $-$ \\
\hline $3$ & 1.2846E-04 & 1.92 & 1.2925E-03 & 1.34 \\
\hline $4$ & 3.2667E-05 & 1.98 & 5.9046E-04 & 1.13 \\
\hline $5$ & 8.2042E-06 & 1.99 & 2.8757E-04 & 1.04 \\
\hline $6$ & 2.0534E-06 & 2.00 & 1.4280E-04 & 1.01 \\
\hline $7$ & 5.1351E-07 & 2.00 & 7.1278E-05 & 1.00 \\
\hline
       \end{tabular}
       \end{center}
\end{table}

\section{Conclusion}

In this paper, we have advanced and analyzed a V-cycle multigrid method with an overlapping Schwarz smoother for the HHJ mixed method. The novelties of our
V-cycle multigrid method are:
\begin{enumerate}[(1)]
\item Full regularity assumption is not necessary for our multigrid method, i.e. our approach works for both convex and non-convex domains.
\item One smoothing step is enough to guarantee the uniform convergence of our V-cycle multigrid algorithm, whereas large enough smoothing steps are usually required in the former multigrid methods for the fourth order partial differential equation.
\end{enumerate}

To obtain the uniform convergence of our V-cycle multigrid algorithm, we establish the exact sequence for the HHJ mixed method in both the continuous and discrete levels, and prove the stable decomposition and strengthened Cauchy Schwarz inequality.
Then using the framework developed in~\cite{ChenSaddle} we obtain the uniform convergence.


\end{document}